\newtheorem {theorem}{Theorem}[section]
\newtheorem {lemma} [theorem] {Lemma}
\newtheorem {proposition} [theorem] {Proposition}
\newtheorem {property} [theorem] {Property}
\theoremstyle{definition}
\newtheorem{definition}[theorem]{Definition}
\newtheorem{remark}[theorem]{Remark}
\def\C {\mathbb C}
\def\R {\mathbb R}
\def\Hy{\mathbf{H}}
\def\HC{\mathbf{H}_\C}
\def\mc {\mathcal}
\def\Z {\mathbb{Z}}
\def\cP{\mathcal{P}}
\DeclareMathOperator{\PU}{PU}
\begin{document}
	\title[complex hyperbolic lattices and (relative) strict hyperbolization]{A note on complex hyperbolic lattices and strict hyperbolization}
	
	%\author{Shaver Phagan}
	%\address{Department of Mathematics\\ Purdue University}
	%\email{phagan@purdue.edu}

	\author{Kejia Zhu}
	\address{Department of Mathematics\\ University of California at Riverside}
	\email{kzhumath@gmail.com}
	\urladdr{https://sites.google.com/view/kejiazhu}

	\subjclass[2020]{20F65, 22E40 (primary), 32J27, 32J05, 	57N65 (secondary)}
	\keywords{CAT(0) cube complexes, relatively hyperbolic groups, complex hyperbolic lattices, toroidal compactification}
	\date{}
	\maketitle
	\begin{abstract}
		We study the connection between the fundamental groups of complex hyperbolic manifolds and those of spaces arising from the (relative) strict hyperbolization process due to Charney--Davis and Davis--Januszkiewicz--Weinberger. Viewing a non-uniform lattice \(\Gamma\) in \(\text{PU}(n,1)\) as a relatively hyperbolic group with respect to its cusp subgroups in the usual way, we show that when $n\geq 2$, \(\Gamma\) is not isomorphic to
		any relatively hyperbolic group arising from the relative strict hyperbolization process, via work of Lafont--Ruffoni. We also prove that a uniform lattice in $\text{PU}(n,1)$ is not the fundamental group of a Charney-Davis strict hyperbolization when $n\geq 2$, assuming the initial complex satisfies some mild conditions.
	\end{abstract}

	\section{Introduction}
	%As Kejia asked me, there is a new kind of action in the new paper of Lafont and Ruffon (which Jason and I wrote the appendix for).  If we can fix Remark 4.5, whatever fix we do should also apply to these actions.\\

	The strict hyperbolization introduced by Charney--Davis \cite{charney1995strict} gives rich examples of closed aspherical manifolds with hyperbolic fundamental groups. More generally, their hyperbolization procedure turns a finite simplicial complex $K$ into a strictly negatively curved piecewise hyperbolic finite cell complex $\mathcal H(K)$. There is a relative version, due to Charney--Davis \cite{charney1995strict} and Davis--Januszkiewicz--Weinberger \cite{davis2001relative}, known as the relative strict hyperbolization \cite{charney1995strict} that constructs compact aspherical manifolds with prescribed boundary as follows. If $K$ is a finite simplicial complex with subcomplex $L$, we may associate to \((K,L)\) a topological space \(\mathcal{R}=\mathcal{R}(K,L)\) with boundary. It is known that the boundary of \(\mathcal{R}\) may be identified with \(L\) up to subdivision, and with this identification, the components of \(L\) \(\pi_1\)-inject \(\mathcal{R}\). Let $\cP_\mathcal{R}$ be a set of representatives of the conjugacy classes of the fundamental groups of the components of $L$, viewed as subgroups of $\pi_1(\mathcal R)$. Then the group $\pi_1(\mathcal R)$ is known to be relatively hyperbolic with respect to $\mathcal P_\mathcal{R}$. See \cite{groves2023relative} and \cite{belegradek2007aspherical} for details. It is generally unknown whether the spaces arising from the relative strict hyperbolization process are complex hyperbolic. In this note, we show the following result:
	
	\begin{theorem}\label{main}
		The process of strict hyperbolization of a compact homogeneous $n$-dimensional simplicial complex without boundary and the process of relative strict hyperbolization of a finite simplicial complex with subcomplex cannot produce complexes with fundamental groups isomorphic to complex hyperbolic lattices of (complex) dimension $n\ge2$, in particular, the process of (relative) strict hyperbolization above cannot produce complex hyperbolic manifolds of (complex) dimension at least $2$.
	\end{theorem}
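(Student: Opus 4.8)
The plan is to combine a cohomological dimension count with the cubulation theorems of Lafont--Ruffoni and the rigidity of complex hyperbolic lattices. For the strict hyperbolization of an $n$-dimensional complex $K$, the cleanest route is a dimension count: $\mathcal{H}(K)$ is a finite aspherical complex of dimension $n$, so any group isomorphic to $\pi_1(\mathcal{H}(K))$ has cohomological dimension at most $n$, whereas a complex hyperbolic lattice of complex dimension $n$ has cohomological dimension $2n$ in the uniform case and virtual cohomological dimension $2n-1$ in the non-uniform case; since $2n-1>n$ precisely when $n\ge2$, this already settles the strict case, and the same count settles the relative case whenever $\dim K$ is below the (virtual) cohomological dimension of the target lattice. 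The real content is therefore in the regime where this count is silent --- the relative case with $\dim K$ unrestricted, and the ``in particular'' about manifolds, where one hyperbolizes a $2n$-dimensional (pseudo)manifold to chase a complex hyperbolic $n$-manifold.

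\emph{The relative case (non-uniform lattices).} Let $(\Gamma,\mathcal{P})$ be a non-uniform lattice in $\mathrm{PU}(n,1)$, $n\ge2$, with its usual relatively hyperbolic structure, where $\mathcal{P}$ is a set of cusp subgroups. The decisive point --- and the reason $n\ge2$ is exactly the right threshold --- is that each cusp subgroup is virtually a lattice in the $(2n-1)$-dimensional Heisenberg group: it is finitely generated, $2$-step nilpotent, and not virtually abelian once $n\ge2$ (for $n=1$ the cusps are infinite cyclic, which is why the statement genuinely fails there). Suppose $(\Gamma,\mathcal{P})$ were isomorphic, as a relatively hyperbolic group, to $(\pi_1(\mathcal{R}(K,L)),\mathcal{P}_{\mathcal{R}})$. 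By the work of Lafont--Ruffoni on relative strict hyperbolization, $\pi_1(\mathcal{R}(K,L))$ is cubulated in a way compatible with its relatively hyperbolic structure, and in particular each peripheral subgroup inherits a proper, semisimple action on a CAT(0) cube complex (see the discussion of the obstacle below). But a finitely generated virtually nilpotent group that acts properly by semisimple isometries on a CAT(0) cube complex is virtually abelian: a commutator $[g^{k},h^{k}]$ equals a power $z^{ck^{2}}$ of a central element $z$, forcing the translation length of $z$ to vanish and hence (properness of the action) $z$ to have finite order. Transporting this across the isomorphism, the cusp subgroups of $\Gamma$ would be virtually abelian, contradicting the structural point above. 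Hence $\Gamma$ is not isomorphic to any relatively hyperbolic group arising from the relative strict hyperbolization process.

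\emph{The uniform case and the manifold refinement.} Let $\Gamma<\mathrm{PU}(n,1)$ be cocompact, $n\ge2$, and suppose $\pi_1(\mathcal{H}(K))\cong\Gamma$ with $K$ satisfying the mild hypotheses of the statement --- which I read as precisely the conditions needed to apply the Lafont--Ruffoni cubulation theorem. Then $\pi_1(\mathcal{H}(K))$ is virtually compact special, so it acts properly and cocompactly on a CAT(0) cube complex, and it is word hyperbolic since strict hyperbolization produces a compact, piecewise-hyperbolic, locally CAT$(-1)$ complex. If this group is isomorphic to $\Gamma$ it is also a Kähler group, being the fundamental group of a closed complex hyperbolic manifold. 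Now apply the Delzant--Py classification of cubulable Kähler groups: such a group is virtually the fundamental group of a compact Kähler manifold of a very restricted type (virtually a principal torus bundle over a product of closed curves of genus $\ge2$), and in the word-hyperbolic case --- no $\mathbb{Z}^{2}$ --- this forces the torus and all but at most one curve factor to be trivial, i.e.\ the group is virtually a closed surface group. But a uniform complex hyperbolic lattice of complex dimension $n\ge2$ has cohomological dimension $2n\ge4$, so it is not virtually a surface group; contradiction. The manifold assertion then follows, since a complex hyperbolic manifold of complex dimension $\ge2$ has fundamental group a complex hyperbolic lattice of that dimension, uniform or not, with the usual relatively hyperbolic structure in the non-compact case.

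\emph{The main obstacle.} I expect the delicate point to be the relative case: pinning down exactly what form of relative cubulation Lafont--Ruffoni provide for $\mathcal{R}(K,L)$ and extracting from it a genuine, usable action of each peripheral subgroup on a CAT(0) cube complex, then interfacing that cleanly with the nilpotency obstruction --- i.e.\ reconciling the relatively-hyperbolic-cubulation formalism with the rigidity statement that virtually nilpotent groups acting properly and semisimply on CAT(0) cube complexes are virtually abelian. A secondary point, in the uniform case, is verifying that the Delzant--Py list genuinely excludes every finite-index subgroup of a complex hyperbolic lattice; this is harmless but uses Mostow rigidity to keep the finite-index subgroups under control.
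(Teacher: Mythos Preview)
Your uniform case is essentially the paper's argument: Lafont--Ruffoni gives hyperbolic virtually special, then Delzant--Gromov (the paper's citation; Delzant--Py works too) forces a surface group, contradicting $\mathrm{cd}=2n\ge4$. The preliminary dimension count is a nice observation but, as you note, irrelevant once one chases a lattice of complex dimension $n$ with a $2n$-dimensional $K$.

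The relative case, however, has a genuine gap, and it is exactly the point you flag as the ``main obstacle.'' The Lafont--Ruffoni relative cubulation is the paper's Property~\ref{property}: the peripheral subgroups act \emph{elliptically} on the CAT(0) cube complex --- each $P\in\mathcal{P}_{\mathcal{R}}$ has a global fixed point --- and cube stabilizers are either maximal parabolic or full relatively quasiconvex, hyperbolic, virtually special. There is no proper (let alone semisimple) action of the peripherals to be extracted from this; the action on the fixed vertex is trivial. So your nilpotency obstruction (``virtually nilpotent $+$ proper semisimple on a cube complex $\Rightarrow$ virtually abelian'') never engages. Concretely, nothing in the relative strict hyperbolization prevents the boundary $L$ from being a disjoint union of Heisenberg nilmanifolds, so $\mathcal{P}_{\mathcal{R}}$ can consist of exactly the $2$-step nilpotent groups you want to rule out.

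The paper's route in the non-uniform case is substantially different and does not attempt to constrain the peripherals directly. Assuming $(\Gamma,\mathcal{P})$ satisfies Property~\ref{property}, one performs a long peripherally finite Dehn filling to produce a hyperbolic, virtually special quotient $\overline{\Gamma}$ whose kernel is a free product of conjugates of the cusp groups (Theorem~\ref{theorem:Dehn}). One then builds an explicit $K(\overline{\Gamma},1)$ by coning off the Borel--Serre boundary (Lemma~\ref{lem:Coneoff}), computes $H_{2n}(\overline{\Gamma})\cong\mathbb{Z}$, and observes that the filling $\Gamma\to\overline{\Gamma}$ factors through the K\"ahler group $\pi_1(\mathcal{T}(M))$ of the toroidal compactification. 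Now Delzant--Gromov forces the map $\pi_1(\mathcal{T}(M))\to\overline{\Gamma}$ to virtually factor through a surface group, killing $H_{2n}$ for $n\ge2$; but the topological construction shows this map is a degree-$\pm1$ isomorphism on $H_{2n}$, a contradiction. The K\"ahler input is applied to the \emph{quotient}, not to the cusp structure, which is why the argument survives the fact that the peripherals act elliptically.
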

	
	\begin{remark}
		Recall, by definition, a complex hyperbolic lattice is a discrete subgroup of $\text{PU}(n,1)$ with finite quotient measure, and a complex hyperbolic manifold is the quotient space of the complex hyperbolic $n$-space $\Hy_{\mathbb C}^n$ by a torsion-free lattice in the $\text{PU}(n,1)$, so the proof of Theorem\ref{main} follows quickly from the Theorem \ref{maintheoremnew} and Theorem \ref{hyperboliclr}.
	\end{remark}

	%In this paper, we want to study the chance that the (relative) strict hyperbolization to be complex hyperbolic. In particular, if the corresponding (relative) hyperbolic groups can arise as complex hyperbolic lattices. Lafont--Ruffoni proved the following Theorem \ref{themA} regarding the properties that the relative hyperbolic group  \((\pi_1(\mathcal{R}), \mathcal{P}_\mathcal{R})\) would satisfy:
	
	The proof for the relative case (Theorem \ref{maintheoremnew}) depends on the work of Lafont--Ruffoni. Lafont--Ruffoni proved the following Theorem \ref{themA} regarding the properties that the relative hyperbolic group  \((\pi_1(\mathcal{R}), \mathcal{P}_\mathcal{R})\) would satisfy: Given a relatively hyperbolic group $(G,\mathcal{P})$ acting isometrically on a CAT(0) cube complex $X$, we define a property below that $(G,\mathcal{P})$ arises as \((\pi_1(\mathcal{R}), \mathcal{P}_\mathcal{R})\) would satisfy:
	\begin{property}\label{property}
		\noindent\\
		(1) $X/G$ is compact.\\
		(2) Each $p\in \mathcal P$ acts elliptically on $X$.\\
		(3) For each cube $C$ of $X$, $\text{Stab}_G(C)$ is either maximal parabolic, or else is full relatively quasi-convex, hyperbolic, and virtually compact special.
	\end{property}
	
	\begin{theorem}[\cite{groves2023relative}-Theorem A]\label{themA}
		The relative hyperbolic group \((\pi_1(\mathcal{R}), \mathcal{P}_\mathcal{R})\) admits an action on a CAT(0) cube complex \(X\) isometrically satisfying the Property \ref{property}
	\end{theorem}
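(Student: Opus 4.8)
The plan is to realize $X$ as the CAT(0) cube complex dual, via Sageev's construction, to a carefully chosen wallspace on the universal cover $\widetilde{\mathcal R}$ of $\mathcal R=\mathcal R(K,L)$, and then to verify the three clauses of Property \ref{property} in turn, the real content lying in clause (3). I would begin by recalling the anatomy of relative strict hyperbolization: $\mathcal R$ is assembled by gluing copies of a fixed compact piecewise-hyperbolic ``hyperbolized simplex'' $Y^n$ --- a negatively curved manifold-with-corners whose face poset is that of $\Delta^n$ --- along the combinatorics of $K$, leaving exposed precisely the faces over $L$, which together form $\partial\mathcal R$, identified with $L$ up to subdivision; and $(\pi_1(\mathcal R),\mathcal P_{\mathcal R})$ is known to be relatively hyperbolic with the stated peripheral structure. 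Following Davis--Januszkiewicz--Weinberger and Ontaneda, the piece $Y^n$ may be chosen so that $\pi_1(Y^n)$ --- and, by Haglund--Wise, every quasiconvex subgroup of it --- is virtually compact special (for instance by building $Y^n$ from an arithmetic hyperbolic lattice of simplest type, to which Bergeron--Haglund--Wise applies).

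Next I would build the walls. The model piece $Y^n$ is (a finite cover of) a piece of a hyperbolic reflection orbifold, hence carries a rich $\pi_1(Y^n)$-invariant family of embedded two-sided totally geodesic codimension-one submanifolds, namely the reflection mirrors and their translates. After passing to finite covers I would place these in general position with one another and with the faces, and arrange that they miss a fixed equivariant system of cusp neighbourhoods of the boundary components; gluing the pieces, they assemble into a $\pi_1(\mathcal R)$-invariant collection of convex, two-sided, embedded ``generic'' walls in $\widetilde{\mathcal R}$. To this I would adjoin, for each boundary component, a finite invariant family of ``peripheral'' walls separating its cusp collar from the rest, arranged so that the chamber at that cusp has stabilizer exactly the peripheral subgroup $P$. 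Declaring the two complementary components of each wall to be its halfspaces yields a wallspace; let $X$ be the dual CAT(0) cube complex with its induced $\pi_1(\mathcal R)$-action.

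Now the verification. For (1): there are only finitely many $\pi_1(\mathcal R)$-orbits of walls --- the generic ones descend from finitely many model hypersurfaces in the finitely many pieces of $K$, the peripheral ones from finitely many cusps --- and, since the walls are convex in the negatively curved space $\widetilde{\mathcal R}$, any compact set meets only finitely many of them; the standard Sageev-type cocompactness argument then gives $X/\pi_1(\mathcal R)$ compact. For (2): each boundary component lies on one side of every generic wall and on the innermost side of its own peripheral walls, so the corresponding ultrafilter is a vertex $v_P$ fixed by $P$, which therefore acts elliptically. For (3): a cube of $X$ is dual to a finite family of pairwise-crossing walls, and its stabilizer is commensurable with the intersection of their stabilizers. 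If those walls are all peripheral walls at a single boundary component, the stabilizer is the corresponding maximal parabolic $P\in\mathcal P_{\mathcal R}$ (and $v_P$ itself is stabilized exactly by $P$). Otherwise a generic wall is involved; its stabilizer is the fundamental group of a totally geodesic subcomplex $W\subset\mathcal R$ that misses the cusps, and $W$ is itself glued from \emph{geodesic} --- hence quasiconvex, hence virtually compact special --- subcomplexes of the pieces $Y^n$. As $W$ and the relevant multi-intersections are then compact and disjoint from the cusps, the cube stabilizer is word-hyperbolic, is totally geodesic and hence full relatively quasiconvex in $(\pi_1(\mathcal R),\mathcal P_{\mathcal R})$ (it meets each peripheral finitely), and is virtually compact special by a combination theorem applied to its complex-of-groups decomposition into the piece-subgroups, whose edge subgroups are separable. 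This is precisely the dichotomy in (3).

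The main obstacle is clause (3), specifically the passage from ``the pieces $Y^n$ are virtually compact special'' to ``the generic wall stabilizers, and their intersections, are virtually compact special''. Carrying this out requires: choosing the walls in sufficiently general position that every relevant multi-intersection is again a compact negatively curved hyperbolized subcomplex with a controlled graph/complex-of-spaces structure; establishing separability of the relevant edge subgroups inside the piece groups (once more via Bergeron--Haglund--Wise and Wise's quasiconvex hierarchy); invoking a Haglund--Wise / Wise combination theorem to deduce virtual specialness of the resulting amalgam; and doing the bookkeeping that identifies exactly which cubes sit ``at a cusp'', so that the parabolic alternative in (3) is assigned correctly and clause (2) holds on the nose. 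By contrast, cocompactness (1) and parabolic ellipticity (2) are comparatively soft once the wall system has been set up with these features.
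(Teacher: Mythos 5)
This statement is quoted from the literature (it is Theorem A of the Lafont--Ruffoni paper with appendix by Groves--Manning, cited as \cite{groves2023relative}); the present paper offers no proof of it, so there is nothing in-paper to compare your attempt against. Judged on its own, your outline does follow the strategy actually used in that reference at a high level: take Sageev's dual cube complex of a wallspace on $\widetilde{\mathcal R}$ built from codimension-one geodesic pieces supplied by the arithmetic (simplest-type, Bergeron--Haglund--Wise) structure of the Charney--Davis piece, check cocompactness and ellipticity of the peripherals, and fight the real battle over the cube-stabilizer dichotomy in clause (3).

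However, as a proof this is a plan rather than an argument, and the gap is exactly where you say it is: everything in clause (3) --- that a wall system exists which is simultaneously $\pi_1(\mathcal R)$-invariant, locally finite, in general position, disjoint from cusp collars, and rich enough that ultrafilter stabilizers are either maximal parabolic or hyperbolic; that the generic wall stabilizers and all their multi-intersections are full relatively quasiconvex; and that virtual compact specialness survives the gluing via separability and a combination theorem --- is asserted as a to-do list, and this list \emph{is} the theorem. Two specific points need more than bookkeeping. First, the natural totally geodesic hypersurfaces in the hyperbolized simplex necessarily interact with the faces of the piece (the face structure is what the hyperbolization gluing uses), so ``arrange that they miss a fixed equivariant system of cusp neighbourhoods'' is in tension with keeping them totally geodesic and cannot simply be imposed; the treatment of walls near $\partial\mathcal R$ is a genuine construction, not a general-position perturbation. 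Second, your introduction of ad hoc ``peripheral walls'' whose innermost chamber has stabilizer exactly $P$ is unjustified as stated: to get a \emph{vertex} of the dual complex fixed by $P$ you need the $P$-invariant ultrafilter to satisfy the descending chain condition, i.e.\ only finitely many walls may separate a basepoint from the cusp, and you need the stabilizer of that vertex to be no larger than $P$; neither is checked. So the route is right, but the proposal does not yet constitute a proof.
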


	\begin{remark}
		Recall that if $(\Gamma,\cP)$ is a relatively hyperbolic group pair and $\Gamma_0\leq \Gamma$ has finite index then $(\Gamma_0,\cP_0)$ is also a relatively hyperbolic group pair, where $\cP_0$ is the set of representatives of the conjugacy classes of\begin{equation}\label{eqn1}\{P^g\cap\Gamma_0|g\in \Gamma, P\in \mathcal P\}.\end{equation} Suppose $(\Gamma,\cP)$ has Property \ref{property}, then it follows immediately that $(\Gamma_0,\cP_0)$ has Property \ref{property}. We will thus freely pass to finite index subgroups in the proof.
	\end{remark}

	Note Property \ref{property} closely resembles those of a relative geometric action, introduced by Einstein and Groves (\cite{einstein2020relative}). The difference lies in $(3)$, where relatively geometric actions require a cell stabilizer to be either finite or conjugate to a finite index subgroup of a peripheral subgroup. In \cite{bregmanrelatively}, Bregman, Groves, and the author proved that \((\Gamma,\mathcal{P})\) does not admit a relative geometric action on a CAT(0) cube complex when \(\Gamma\) is a non-uniform lattice in \(\text{PU}(n,1)\), and \(\cP\) is a set of conjugacy class representatives for the cusp subgroups of \(\Gamma\). Recall we are interested in the chance that relatively strict hyperbolized objects from Lafont--Ruffoni can be complex hyperbolic, it is thus natural to consider whether such pairs can arise from the relative strict hyperbolization considered by Lafont--Ruffoni. The following theorem gives a negative answer. 
	
	\begin{theorem}\label{maintheoremnew}
		When $n\ge2$, a non-uniform lattice of $\PU(n,1)$ as relatively hyperbolic groups (relative to the cusp subgroups), does not satisfy Property \ref{property}, in particular, it does not arise as $(\pi_1(\mathcal R(K, L)),\mathcal P_\mathcal{R})$.
	\end{theorem}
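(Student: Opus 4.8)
The goal is to show that a non-uniform lattice $\Gamma$ in $\PU(n,1)$, $n \ge 2$, viewed as relatively hyperbolic relative to its cusp subgroups $\cP$, cannot satisfy Property \ref{property}. Suppose for contradiction that $(\Gamma, \cP)$ acts on a CAT(0) cube complex $X$ satisfying (1)--(3). The strategy is to locate a cube $C$ whose stabilizer cannot be of either type permitted by (3): it will be too big to be trivial or hyperbolic-virtually-special (it will contain a parabolic-type subgroup forced by the cusps), yet it will fail to be maximal parabolic (because it also contains loxodromic elements, or because it is not commensurable with a single cusp group). The key inputs are that $\Gamma$ acts cocompactly by (1), that the cusp subgroups act elliptically by (2), and structural facts about cusp subgroups of complex hyperbolic lattices --- namely each cusp subgroup $P$ is virtually a lattice in a Heisenberg group $N$ (a $2$-step nilpotent group), so $P$ contains a central infinite cyclic subgroup $Z$ with $P/Z$ virtually $\Z^{2n-2}$, and in particular $P$ has cohomological dimension $2n-1 \ge 3$.

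\textbf{Steps.} First I would record that by (2) each $p \in \cP$ fixes a point, hence (being a group) fixes a cube, and after passing to a finite-index subgroup (permitted by the Remark after Theorem \ref{themA}) I may assume $\cP$ consists of the cusp subgroups themselves acting elliptically; combined with the fact that $X$ is finite-dimensional and $P$ is finitely generated, $P$ stabilizes a cube $C_P$. Second, I would analyze $\mathrm{Stab}_G(C_P) \supseteq P$ under the dichotomy in (3). It cannot be hyperbolic and virtually special relatively quasi-convex of the non-parabolic type, because it contains $P$, which contains $\Z \times \Z$ (indeed $\Z^{2n-2}$-by-$\Z$) non-virtually-cyclic abelian/nilpotent subgroups --- a hyperbolic group contains no $\Z^2$. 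Hence $\mathrm{Stab}_G(C_P)$ must be maximal parabolic, i.e. a conjugate of some $P' \in \cP$. Third --- and this is the crux --- I would derive a contradiction from cocompactness (1) together with the mismatch between the cube-complex geometry near $C_P$ and the structure of a complex hyperbolic cusp. Here I expect to invoke the work from \cite{bregmanrelatively}: either directly, by showing that an action satisfying Property \ref{property} can be upgraded or restricted to yield a relatively geometric action (e.g.\ by coning off or collapsing the parabolic-stabilized cubes), contradicting the main theorem of \cite{bregmanrelatively}; or by re-running the obstruction from that paper, which ultimately rests on the fact (from Bergeron--Mozes--Sauer / Julg or from bounded cohomology) that complex hyperbolic lattices in dimension $\ge 2$ have nontrivial cup products / do not act on CAT(0) cube complexes with the requisite properties because their cusps are not ``co-abelian'' in the right sense.

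\textbf{Main obstacle.} The delicate point is Step 3: Property \ref{property}(3) is genuinely weaker than the relatively-geometric condition, since it allows cube stabilizers that are hyperbolic virtually-special but \emph{not} finite. So I cannot simply quote \cite{bregmanrelatively}; I must show that the presence of such ``large'' special stabilizers is still incompatible with $\Gamma$ being a complex hyperbolic lattice. The plan is to argue that a non-parabolic cube stabilizer $H = \mathrm{Stab}_G(C)$, being full relatively quasiconvex in $\Gamma$, is itself a relatively hyperbolic group whose peripherals are (conjugates of) intersections $H \cap P^g$; since $H$ is hyperbolic with no parabolics by assumption, these intersections are finite, so $H$ is a quasiconvex subgroup of $\Gamma$ on which the cusps act with bounded orbits. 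One then runs the cubical/Gromov-boundary argument of \cite{bregmanrelatively} with these $H$'s playing the role that finite stabilizers played there, using that $\Gamma/\!\!\sim$ the link structure still exhibits $\Gamma$ acting on the Roller boundary / on a measured wall space in a way that a complex hyperbolic lattice of dimension $\ge 2$ cannot, by superrigidity or by the vanishing of the relevant $\ell^2$-Betti numbers below the middle dimension. Carefully reducing Property \ref{property} to the hypotheses of the \cite{bregmanrelatively} obstruction --- rather than re-proving that obstruction --- is where the real work lies, and I would try to isolate it as a standalone lemma: \emph{if $(\Gamma,\cP)$ satisfies Property \ref{property} then $(\Gamma,\cP)$ acts relatively geometrically on some (possibly different) CAT(0) cube complex}, after which Theorem \ref{maintheoremnew} is immediate from \cite{bregmanrelatively}.
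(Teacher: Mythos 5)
Your Steps 1 and 2 are correct but do not by themselves produce any contradiction: Property \ref{property}(3) explicitly \emph{permits} maximal parabolic cube stabilizers, so concluding that $\mathrm{Stab}_\Gamma(C_P)$ is maximal parabolic (because it contains $P$, which contains $\Z^2$ and hence cannot sit inside a hyperbolic stabilizer) merely confirms that the action is consistent with the hypotheses. The entire burden of the proof falls on your Step 3, and there you have not given an argument --- you have listed several candidate strategies (upgrade to a relatively geometric action and quote \cite{bregmanrelatively}; re-run that paper's obstruction; invoke superrigidity or $\ell^2$-Betti numbers) without carrying any of them out. The proposed ``standalone lemma'' that Property \ref{property} implies a relatively geometric action on some cube complex is not established and is doubtful as stated: condition (3) allows infinite hyperbolic virtually special stabilizers, and there is no evident procedure for collapsing or modifying the cube complex to make those stabilizers finite while preserving cocompactness and ellipticity of the peripherals. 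So as written the proposal has a genuine gap at its central step.

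For comparison, the paper takes a different and more concrete route that handles the non-parabolic stabilizers head-on rather than trying to eliminate them. From Property \ref{property} it constructs (Theorem \ref{theorem:Dehn}) a sufficiently long peripherally finite Dehn filling $\varphi\colon\Gamma_0\to\overline{\Gamma}_0$ using Groves--Manning's results (\cite{groves2018hyperbolic}, Corollary 6.6 and Theorem D), for which the full relatively quasi-convex, hyperbolic, virtually special stabilizers are exactly what is needed to conclude that the quotient $\overline{\Gamma}_0$ is hyperbolic and virtually special --- hence cubulated and admitting a full system of convex cuts. The filling kernel is identified as a free product of the peripheral subgroups, which shows (via the cusp-collapsed space $\mathcal{C}(M)$) that $H_{2n}(\overline{\Gamma})\cong\Z$ and that the fundamental class of the Borel--Serre pair survives. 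Since the filling kills all of each cusp group while the toroidal compactification kills only the circle directions, $\varphi$ factors through the K\"ahler group $\pi_1(\mathcal{T}(M))$; Delzant--Gromov then forces the second map to factor virtually through a surface group, which has cohomological dimension $2<2n$, killing $H_{2n}$ and contradicting the surviving fundamental class. If you want to salvage your outline, the missing ingredient is precisely this Dehn-filling-plus-K\"ahler mechanism (or an equally concrete substitute); a reduction to \cite{bregmanrelatively} alone will not go through.
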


	Now given a uniform lattice $\Gamma$ in \(\text{PU}(n,1)\), by Selberg’s lemma, we can always pass it to a torsion-free finite index subgroup. Then it acts freely properly on the complex hyperbolic $n$-space $\HC^n$ and the quotient space is a compact K\"ahler manifold with negative curvature. Recall a K\"ahler group is the fundamental group of a compact K\"ahler manifold. In particular, $\Gamma$ is a K\"ahler group. Also recall a cubulated group is a group acting geometrically on a CAT(0) cube complex. Delzant--Gromov in \cite{delzant2005cuts}[Corollary, p.52] showed that any K\"ahler group that admits a full system of convex cuts is commensurable to a surface group, where a convex cut is a codimension one quasi-convex subgroup. Recall a virtually special hyperbolic group is cubulated hyperbolic (see \cite{wise2021structure}-Lemma 7.14), in particular, it admits a full system of convex cuts,
	we apply the result of Delzant–Gromov to the following result of Lafont and Ruffoni \cite[Theorem 1.1 and 1.2]{lafont2022special} to get the non-relative version of the Theorem \ref{themA} (see
	Theorem \ref{hyperboliclr}):
	
	Let $n > 0$ and $K$ an \(n\)-dimensional simplicial complex which is compact, homogeneous, and without boundary. If $\mathcal H(K)$ is the Charney--Davis strict hyperbolization of $K$, then $\pi_1(\mathcal H(K))$ is Gromov hyperbolic and virtually compact special.
	
	\begin{theorem}\label{hyperboliclr}
		If \(K\) is a compact homogeneous $n$-dimensional simplicial complex without boundary, then \(\pi_1(\mathcal{H}(K))\) is not commensurable to a K\"ahler group if its cohomology dimension is strictly greater than $2$. In particular,  \(\pi_1(\mathcal{H}(K))\) is not isomorphic to a uniform lattice in $\PU(n,1)$ for $n\ge2$.
	\end{theorem}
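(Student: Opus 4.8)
The plan is to assemble the theorem quickly from three ingredients already in play: the Lafont--Ruffoni statement quoted just above (that $\pi_1(\mathcal H(K))$ is Gromov hyperbolic and virtually compact special), the fact recalled in the excerpt that a virtually compact special hyperbolic group is cubulated and hence admits a full system of convex cuts, and the Delzant--Gromov theorem that a Kähler group admitting a full system of convex cuts is commensurable to a surface group. The only extra input is that a surface group has cohomological dimension at most $2$, while cohomological dimension of a torsion-free group is a commensurability invariant (Serre). Write $\Pi:=\pi_1(\mathcal H(K))$. Since $\mathcal H(K)$ is a finite aspherical complex, $\Pi$ has a finite-dimensional classifying space, so $\Pi$ is torsion-free and $\mathrm{cd}(\Pi)$ is a well-defined finite integer (at most $\dim\mathcal H(K)=n$); the hypothesis is $\mathrm{cd}(\Pi)>2$.

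For the first assertion I would argue by contradiction. Suppose $\Pi$ is abstractly commensurable to a Kähler group $Q$, and choose finite-index subgroups $G_1\le\Pi$, $G_2\le Q$ with $G_1\cong G_2=:G$. A finite-index subgroup of a Kähler group is again Kähler (pass to the corresponding finite cover of a compact Kähler manifold), so $G$ is Kähler. A finite-index subgroup of a hyperbolic group is hyperbolic, and a finite-index subgroup of a virtually compact special group is again virtually compact special (a finite cover of a compact special cube complex is compact special), so $G$ is hyperbolic and virtually compact special; by Wise's structure theory, as recalled above, $G$ is then cubulated hyperbolic and admits a full system of convex cuts. Delzant--Gromov now gives that $G$ is commensurable to a surface group $S$. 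Chaining commensurabilities, $\Pi$ is commensurable to $S$, and since $\Pi$ and $S$ are torsion-free of finite cohomological dimension, Serre's theorem forces $\mathrm{cd}(\Pi)=\mathrm{cd}(S)\le 2$, contradicting $\mathrm{cd}(\Pi)>2$.

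For the ``in particular'' statement, let $\Gamma$ be a uniform lattice in $\PU(n,1)$ with $n\ge 2$ and suppose $\Pi\cong\Gamma$. Then $\Gamma$ is torsion-free (being isomorphic to $\Pi$), so $X:=\HC^n/\Gamma$ is a closed aspherical $2n$-manifold which is a $K(\Gamma,1)$; since $\PU(n,1)$ is connected, $X$ is orientable, whence $H^{2n}(\Gamma;\Z)\neq 0$ and $\mathrm{cd}(\Gamma)=2n\ge 4>2$. Moreover the $\PU(n,1)$-invariant Kähler metric on $\HC^n$ descends to $X$, so $\Gamma=\pi_1(X)$ is a Kähler group. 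Thus $\Pi\cong\Gamma$ would be a Kähler group — in particular commensurable to one — of cohomological dimension greater than $2$, contradicting the first part. Hence $\Pi$ is not isomorphic to any uniform lattice in $\PU(n,1)$ with $n\ge 2$.

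The only real care needed — there is no substantive obstacle, since the geometric content is entirely imported — is to make sure the codimension-one quasiconvex subgroups coming out of Wise's theory of special cube complexes genuinely constitute a \emph{full} system of convex cuts in the precise sense required by Delzant--Gromov, and to keep track of torsion-freeness along the whole commensurability chain so that Serre's invariance of cohomological dimension applies. It is also worth noting that the hypotheses on $K$ (compact, homogeneous, without boundary) are exactly those under which the quoted Lafont--Ruffoni theorem guarantees that $\pi_1(\mathcal H(K))$ is hyperbolic and virtually compact special.
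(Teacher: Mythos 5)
Your proposal is correct and follows essentially the same route as the paper: Lafont--Ruffoni gives hyperbolicity and virtual specialness, hence a full system of convex cuts, Delzant--Gromov then forces commensurability to a surface group, and the cohomological-dimension bound yields the contradiction; the ``in particular'' clause is likewise handled by noting that a (torsion-free) uniform lattice in $\PU(n,1)$ is a K\"ahler group of cohomological dimension $2n\ge 4$. The only cosmetic differences are that you make Serre's commensurability-invariance of $\mathrm{cd}$ explicit and derive torsion-freeness of $\Gamma$ from $\Gamma\cong\pi_1(\mathcal H(K))$ rather than via Selberg's lemma as the paper does.
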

	\begin{proof}
		By Lafont and Ruffoni \cite[Theorem 1.1 and 1.2]{lafont2022special}, \(\pi_1(\mathcal{H}(K))\) is 
		Gromov hyperbolic and virtually special. Suppose \(\pi_1(\mathcal{H}(K))\) is commensurable to a K\"ahler group, then it has a finite index subgroup, say $G$, which is K\"ahler, Gromov hyperbolic and virtually special (recall a finite index subgroup of a K\"ahler is still K\"ahler). Now by \cite{delzant2005cuts}[Corollary, p.52], $G$, thus \(\pi_1(\mathcal{H}(K))\), is commensurable to a surface group, whose cohomological dimension is $2$, so the cohomological dimension of \(\pi_1(\mathcal{H}(K))\) can not be strictly greater than $2$. In particular, \(\pi_1(\mathcal{H}(K))\) is not isomorphic to a uniform lattice $\Gamma<\PU(n,1)$ for $n\ge2$ because otherwise by Selberg's lemma, $\Gamma$ has a finite index torsion-free subgroup, which is K\"ahler and with cohomological dimension $2n\ge 4$, therefore \(\pi_1(\mathcal{H}(K))\) is commensurable to a K\"ahler group of cohomological dimension $2n\ge 4$, contradiction.
	\end{proof}
	
	\begin{remark}
		Belegradek in \cite[Theorem 10.3]{belegradek2007aspherical} showed that if $K$ is a finite simplicial complex, then $\mathcal H(K)$ is not homotopy equivalent to a compact  K\"ahler manifold of complex dimension $n\ge2$. The theorem \ref{hyperboliclr} gives a stronger result in the sense of fundamental groups by assuming $K$ satisfies some mild conditions. If the relative strict hyperbolization $\mathcal{R}(K,L)$ is an $n$-manifold, Belegradek in \cite[Theorem 10.5]{belegradek2007aspherical} proved that the interior of $\mathcal{R}(K,L)$ is not homeomorphic to an open set of a compact K\"ahler manifold of complex dimension $m\ge2$. The theorem \ref{maintheoremnew} directly deals with the fundamental group of the whole $\mathcal{R}(K,L)$ without assuming $\mathcal{R}(K,L)$ to be a manifold.
		
	\end{remark}

	\noindent
	\textbf{Outline:} The remainder of this note is devoted to the proof of \ref{maintheoremnew}. In Section \ref{dehnfilling}, we first introduce the Dehn filling and related definitions. Then we apply some techniques from Groves and Manning to prove a key technical result, i.e. the Theorem \ref{theorem:Dehn}. In Section \ref{secCompact}, we first review the Borel--Serre and Toroidal compactifications and construct three compact Eilenberg–MacLane spaces by filling cusps, whose fundamental groups are sitting in a factorization. In Section \ref{ending}, we prove Theorem \ref{maintheoremnew} by applying the result of Delzant--Gromov to the factorization above to deduce a contraction if we assume $(\Gamma,\cP)$ satisfies Property \ref{property}.\\

	\noindent\textbf{Acknowledgments:} The author would like to thank his advisor, Daniel Groves, for answering his questions. He also would like to thank Corey Bregman, Matthew Durham, Shaver Phagan, and Lorenzo Ruffoni for their helpful comments. In particular, the author would like to thank Corey Bregman for permitting the author to employ some methods and techniques from our joint work in an unpublished version of \cite{bregmanrelatively} which are not used in the final version with Daniel Groves.

	\section{Relatively Hyperbolic Dehn Filling}\label{dehnfilling}
	Dehn filling is a well-known procedure for changing hyperbolic structures in $3$-manifold topology (\cite[Section 5.8]{thurston2022geometry}). Mimicking the effect of the classical Dehn filling of a manifold on its fundamental group, Osin \cite{osin2007peripheral} and Groves and Manning \cite{GrovesManningDehn} developed a generalized notion of Dehn filling, which is entirely group-theoretic and a-priori devoid of any geometric content. We now recall some basic definitions associated with this generalized notion of a Dehn filling of a group pair $(G,\mathcal{P})$, which are frequently used in the techniques developed by Groves and Manning \cite{GrovesManningDehn}:\\
	
	Given a group pair $(G,\mathcal P)$, where $\mathcal P = \{P_1, ..., P_m\}$ and we choose a normal subgroup $N_i$ for each peripheral group $P_i$. Define $\mathcal{N}:=\{N_i\}$. Let $K$ be the normal closure generated by the union $\bigcup N_i$.
	\begin{definition}[Dehn Filling, \cite{GrovesManningDehn}]
		The \emph{Dehn filling} of $(G,\mathcal P)$ with respect to $\mathcal{N}$ is the pair $(\overline{G},\overline{\mathcal P})$ from the quotient $G\to G/K=:\overline{G}$, where $\bar{\mathcal P}$ is the set of images of $\mathcal P$ under this quotient. The elements of $\mathcal N$ are called the \emph{filling kernels}. When we want to specify the filling kernels we will write $G(N_1,\ldots, N_m)$ for the quotient $\overline{G}$. The filling is \emph{peripherally finite} if each normal subgroup $N_i$ has finite index in $P_i$. Given a subgroup $Q<G$, we call $G(N_1,\ldots,N_m)$ a \emph{$Q$-filling} if for all $g \in G$, and $P_i\in \cP$,  $|Q \cap P_i^g|=\infty$ implies $N_i^g\subseteq Q$. If $\mc{Q}=\{Q_1,\ldots, Q_l\}$ is a family of subgroups, then $G(N_1,\ldots, N_m)$ is a \emph{$\mc{Q}$–filling} if it is a $Q$–filling for every $Q\in \mc{Q}$. We say that a property $\mathcal X$ holds for all \emph{sufficiently long} Dehn fillings of $(G,\mathcal P)$ if there is a finite subset $X\subset G\setminus\{1\}$ so that whenever $N_i \cap B =\emptyset$ for all $i$, the corresponding Dehn filling $G(N_1,..., N_n)$ has property $\mathcal X$.
	\end{definition}
	
	\begin{definition}
		A subgroup $G_1$ of $G$ is \emph{full} if whenever $P$ is a parabolic subgroup of $G$ such that $|G_1\cap  P|$ is infinite, we have $G_1\cap P$ is a finite index subgroup of $P$. 
	\end{definition}

	Let $\cP_0=\{P_{0,1},\ldots,P_{0,r}\}$ be the collection of peripheral subgroups of \(\overline{\Gamma}_0\) induced by $\cP$ as in Equation\ref{eqn1}. The following theorem is regarding Property \ref{property}. The proof of part (1), in particular, the filling, is motivated by the proof of Theorem A4 in the appendix of \cite{groves2023relative} by Groves and Manning.

	\begin{theorem}\label{theorem:Dehn}Let $(\Gamma,\cP)$ be a relatively hyperbolic group with peripheral subgroups residually finite. If $(\Gamma,\mathcal P)$ admits an action on a CAT(0) cube complex $X$ satisfying Property \ref{property}. Then there is a finite index subgroup $\Gamma_0\leq \Gamma$ and a Dehn filling $\psi:\Gamma_0\rightarrow\overline{\Gamma}_0$ satisfying: 
		\begin{enumerate}
			\item $\overline{\Gamma}_0$ is hyperbolic and virtually special 
			\item $K:=\ker\psi$ is the normal closure of the subgroup generated by the peripheral subgroups in $\mathcal{P}_0$, and \(K\cong \underset{g\in \overline \Gamma_0}{\text{\Large$\ast$}}(P_{0,1}*\cdots*P_{0,r})\), where the inner free product is over $\overline \Gamma_0$-orbits of elements of $\cP_0$.
		\end{enumerate}
	\end{theorem}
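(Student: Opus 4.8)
\textit{Proof proposal.}
The plan is to take for $\psi$ the Dehn filling that kills the \emph{entire} induced peripheral structure of a suitable finite‑index subgroup $\Gamma_0\le\Gamma$, obtained as the composite of a long, peripherally finite filling (built essentially as in the proof of \cite[Theorem A.4]{groves2023relative}, where Property \ref{property} is exactly what is needed) with a further filling killing the resulting \emph{finite} peripheral images; part (2) is then a covering‑space computation.

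For part (1), first replace $\Gamma$ by a finite‑index subgroup $\Gamma_0$, chosen — using residual finiteness of the peripherals and the Remark preceding the theorem — so that the restricted action still satisfies Property \ref{property} and the bookkeeping below goes through. Pick finite‑index normal subgroups $\dot N_j\trianglelefteq P_{0,j}$ and set $\dot\Gamma_0:=\Gamma_0(\dot N_1,\dots,\dot N_r)$. By the relatively hyperbolic Dehn filling theorem (Osin \cite{osin2007peripheral}, Groves--Manning \cite{GrovesManningDehn}), for all sufficiently deep choices $\dot\Gamma_0$ is hyperbolic and each $P_{0,j}$ injects modulo $\dot N_j$. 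Since each hyperbolic cube stabilizer $Q$ from Property \ref{property}(3) is full and relatively quasi‑convex it meets only finitely many peripheral cosets infinitely, and then in finite‑index subgroups, so shrinking the $\dot N_j$ further (again using residual finiteness) one may arrange that the filling is a $Q$‑filling for every such $Q$ simultaneously; then, as in the proof of \cite[Theorem A.4]{groves2023relative}, the $\Gamma_0$‑action on $X$ descends to a cocompact action of $\dot\Gamma_0$ on a CAT(0) cube complex whose cube stabilizers are either finite (images of maximal parabolics, of which a finite‑index subgroup has been killed) or the images of the $Q$'s, which remain hyperbolic and virtually compact special. A cubical combination theorem — blowing up the complex using CAT(0) cube complexes on which the stabilizers act geometrically, then applying Agol's theorem — shows $\dot\Gamma_0$ is hyperbolic and virtually compact special. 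Now $\psi\colon\Gamma_0\to\overline\Gamma_0:=\Gamma_0/\langle\langle\cP_0\rangle\rangle$ factors through $\dot\Gamma_0$, and $\overline\Gamma_0=\dot\Gamma_0/\langle\langle\dot P_{0,1},\dots,\dot P_{0,r}\rangle\rangle$ with each $\dot P_{0,j}=P_{0,j}/\dot N_j$ finite. Killing the normal closures of these finitely many finite subgroups keeps the group hyperbolic, and, realising $\dot\Gamma_0$ properly cocompactly on a CAT(0) cube complex (on which the finite $\dot P_{0,j}$ fix cubes) and descending through this last filling, presents $\overline\Gamma_0$ as acting cocompactly with finite stabilizers on a CAT(0) cube complex; by Agol's theorem $\overline\Gamma_0$ is hyperbolic and virtually compact special. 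By construction $K=\ker\psi=\langle\langle\cP_0\rangle\rangle$.

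For (2), take an aspherical CW complex $W$ with $\pi_1W=\Gamma_0$ containing pairwise disjoint $\pi_1$‑injective subcomplexes $W_j\simeq K(P_{0,j},1)$ realising $\cP_0$, and let $\overline W$ be obtained by coning off each $W_j$, so that $\pi_1\overline W=\overline\Gamma_0$. In the cover $p\colon\widehat W\to W$ corresponding to $K\trianglelefteq\Gamma_0$, with deck group $\overline\Gamma_0$, each conjugate $P_{0,j}^g$ lies in $K$, so every component of $p^{-1}(W_j)$ maps homeomorphically to $W_j$ and these components are freely permuted by $\overline\Gamma_0$; collapsing all of them produces a space with trivial fundamental group, since that group is $K/\langle\langle P_{0,j}^g\rangle\rangle=1$. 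Hence $\widehat W$ has the structure of a tree of spaces with simply connected complement and vertex spaces the copies of the $K(P_{0,j},1)$ — here the almost malnormality of $\cP_0$, inherent in relative hyperbolicity, is what guarantees that distinct copies are attached independently and along $\pi_1$‑trivial subsets — and van Kampen's theorem gives
\[
K=\pi_1\widehat W\;\cong\;\underset{g\in\overline\Gamma_0}{\text{\Large$\ast$}}\bigl(P_{0,1}^{\tilde g}\ast\cdots\ast P_{0,r}^{\tilde g}\bigr),
\]
equivalently, $K$ acts on the associated Bass--Serre tree with these vertex groups and trivial edge groups.

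I expect the main obstacle to be the faithful adaptation of \cite[Theorem A.4]{groves2023relative}: carrying out the Groves--Manning construction of a CAT(0) cube complex for the filled group with control on cube stabilizers, arranging — via residual finiteness of the peripherals and fullness of the hyperbolic cube stabilizers — that the auxiliary filling is simultaneously a $Q$‑filling for all relevant $Q$, and invoking the cubical combination theorem to upgrade ``cocompact action with virtually compact special stabilizers'' to virtual compact specialness. The passage to the full filling (killing the finite peripheral images) and the covering‑space argument for (2) — in particular making precise the tree‑of‑spaces structure of $\widehat W$ from almost malnormality — should by comparison be routine.
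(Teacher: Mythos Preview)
Your outline is close to the paper's in spirit --- both use a long peripherally finite $\mathcal{Q}$--filling together with the Groves--Manning cube--complex machinery (Corollary~6.6 and Theorem~D of \cite{groves2018hyperbolic}) to produce a hyperbolic virtually special quotient --- but the order of operations is reversed, and this creates a genuine gap.

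In the paper one first performs the long filling $\Gamma\to\overline\Gamma$, proves $\overline\Gamma$ is hyperbolic and virtually special, and \emph{only then} chooses $\overline\Gamma_0\le\overline\Gamma$ to be a torsion--free finite--index normal subgroup, with $\Gamma_0$ its preimage. Torsion--freeness of $\overline\Gamma_0$ forces each induced peripheral $P_{0,i}=\Gamma_0\cap P_j^{\,g}$ to coincide with a conjugate $N_j^{\,g}$ of a filling kernel (since $P_j/N_j$ injects into $\overline\Gamma$ by \cite{osin2007peripheral} and its image meets $\overline\Gamma_0$ trivially). Thus $\psi\colon\Gamma_0\to\overline\Gamma_0$ is \emph{simultaneously} the ``total'' filling $\Gamma_0/\llangle\cP_0\rrangle$ and the restriction to $\Gamma_0$ of a sufficiently long filling of $(\Gamma,\cP)$; no second filling is needed, and (1) is immediate.

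You instead fix $\Gamma_0$ first (with unspecified ``bookkeeping''), perform a long filling to $\dot\Gamma_0$, and then propose a second filling killing the finite images $\dot P_{0,j}$. The claim that this second quotient remains hyperbolic and virtually special is not justified: killing the normal closure of a finite subgroup is not a long filling, so neither the relatively hyperbolic Dehn filling theorem nor Corollary~6.6 of \cite{groves2018hyperbolic} applies, and there is no reason the quotient of your cube complex by this kernel is again CAT(0). Your ``descending through this last filling'' is precisely the step that needs, and lacks, an argument.

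For (2), the paper applies \cite[Theorem~4.8]{GrovesManningSisto} to the long filling of $\Gamma$ (where the hypothesis holds) and then reindexes the free product over $\overline\Gamma_0$--orbits. Your covering--space argument tries to establish the free--product structure directly for the total filling of $\Gamma_0$, but the step from ``collapsing all peripheral copies in $\widehat W$ gives a simply connected space'' to ``$\widehat W$ is a tree of spaces with trivial edge groups'' is essentially the content of that theorem and does not follow from almost malnormality alone; absent a longness hypothesis there can be extra relations among the conjugates $P_{0,j}^{\,g}$ inside $K$. The fix is again the paper's order of operations: arrange that the total filling on $\Gamma_0$ \emph{is} the restriction of a long filling on $\Gamma$, and then cite \cite{GrovesManningSisto} directly.
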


	%\textcolor{orange}{these definitions, words are copied from Corey and me, we should sort of change?}
	\begin{proof} For (1), the strategy is to construct a sufficiently long peripherally finite $\mathcal Q$-filling (see the second paragraph) and then apply the Corollary 6.6 of \cite{groves2018hyperbolic}. Suppose $(\Gamma,\cP)$ satisfies the Property \ref{property} on a CAT(0) cube complex $X$. Since $X/\Gamma$ is compact, there are only finitely many $\Gamma$-orbits of cubes in $X$. By assumption, the stabilizer of each cube is full relatively quasi-convex (recall maximal parabolic subgroups are naturally full relatively quasi-convex), and conversely, each element of $P_j\in \cP$ stabilizes some cube. Therefore, the hypotheses of Corollary 6.6 of \cite{groves2018hyperbolic} is satisfied by the action of $\Gamma$ on $X$.

		Let $T_1,\ldots, T_k$ be representatives of the $\Gamma$-orbits of cubes in $X$, and let $\mathcal{Q}=\{Q_1,\ldots, Q_k\}$, where $Q_i$ is the finite index subgroup of the stabilizer of $T_i$ fixing $T_i$ pointwise. Now we construct a sufficiently long $\mathcal Q$-filling: for each $P_j\in\mathcal{P}$ there are finitely many infinite intersections $P_j\cap Q_i^g$ for $Q_i\in\mathcal{Q}, g\in G$; each such $P_j\cap Q_i^g$ is finite index in $\mathcal{P}$ (recall each $Q_i$ as a cell stabilizer is full relatively quasi-convex). Let $N$ be the normal core of the intersection of these $P_j\cap Q_i^g$, where the normal core here is the largest normal subgroup of $\Gamma$ that is contained in the intersection. Given any collection of finite index $\{N_j'\unlhd P_j\}$, the collection $\{N_j:=N_j'\cap N\}$ gives a peripherally finite $\mathcal{Q}'$–filling. Since each $P_j$ is residually finite, we are able to choose the $N_j'$ to avoid any finite subset of $G\setminus \{1\}$, so the $\mathcal{Q}'$–filling is also sufficiently long.

		Now we apply the Corollary 6.6 of \cite{groves2018hyperbolic} to this sufficiently long $\mathcal Q'$-filling, which asserts that the quotient $\overline X=K\backslash X$ is a CAT(0) cube complex. Since the filling is peripherally finite, the peripheral subgroups of $(\overline{\Gamma},\overline{\cP})$ are finite, thus $\overline{\Gamma}$ is hyperbolic. 
		
		Observe that $\bar{X}/\overline{\Gamma}=(X/K)/(\Gamma/K)=X/\Gamma$ is compact, so $\overline{\Gamma}$ acts cocompactly on $\bar{X}=K\backslash X$. Also, note the stabilizers are either finite or quasi-convex and virtually special. So the Theorem D of \cite{groves2018hyperbolic} now asserts that $\overline{\Gamma}$ is virtually special, thus $\overline{\Gamma}$ is residually finite and virtually torsion-free. Let $\overline \Gamma_0$ be a finite index torsion-free normal subgroup of $\overline \Gamma$, in particular, $\overline{\Gamma}_0$ is hyperbolic and virtually special. This proves (1).
		
		Define $\Gamma_0$ to be the preimage of $\overline{\Gamma}_0$ in the filling $K\to \Gamma\to \overline\Gamma=\Gamma(N_1,...,N_n)$, in particular $\Gamma_0$ is a finite index subgroup of $\Gamma$ that is hyperbolic relative to $\mathcal{P}_0$ defined as in Equation\ref{eqn1}. Also note that $K$ is contained in $\Gamma_0$.

		For (2), we first claim that each peripheral subgroup group $P_{0,i}$ of $\Gamma_0$ is conjugate to one of the filling kernels $N_j$: recall by Equation\ref{eqn1}, each $P_{0,i}=\Gamma_0\cap P_j^g$ for some $P_j\in \cP$ and $g\in \Gamma$. Recall $N_j^g< K\leq \Gamma_0$ and $N_j^g$ is a finite index subgroup of $P_j^g$, so $N_j^g$ is a finite index subgroup of $\Gamma_0\cap P_j^g=P_{0,i}$.  On the other hand, by Theorem 1.1 of \cite{osin2007peripheral}, we may assume that $P_j/N_j\to \overline{\Gamma}$ is injective, thus $P_j^g/N_j^g\to\overline{\Gamma}$ is also injective. Recall $\overline\Gamma_0<\overline\Gamma$ is torsion-free and $P_j^g/N_j^g$ is finite, so the injection implies that $ P_{0,i}=\Gamma_0\cap P_j^g\subset N_j^g$, it follows that $N_j^g= P_{0,i}$, as desiorange. In particular, $\llangle \bigcup_i P_{0,i}\rrangle=\llangle \bigcup_j N_{j}^g\rrangle$. Therefore, $$K=\llangle \bigcup_i N_{i}\rrangle=\llangle \bigcup_i N_{i}^g\rrangle=\llangle \bigcup_i P_{0,i}\rrangle.$$ Now by the Theorem 4.8 of \cite{GrovesManningSisto}, for a sufficiently long Dehn filling, we have that \[K\cong \underset{g\in \overline{\Gamma}}{\text{\Large$\ast$}}(N_1*\cdots*N_m)\] Under the conjugation action of $\overline{\Gamma}_0$, we can rewrite this as 
		\begin{align*}K&\cong \underset{g\in \overline{\Gamma}_0}{\text{\Large$\ast$}}(*_{\bar{t}\in \overline{\Gamma}/\overline{\Gamma}_0} (N_1*\cdots*N_m))\\
			&\cong \underset{g\in \overline{\Gamma}_0}{\text{\Large$\ast$}}(*_{t\in \Gamma/\Gamma_0} (N_1*\cdots*N_m))\\
			&\cong \underset{g\in \overline \Gamma_0}{\text{\Large$\ast$}}(P_{0,1}*\cdots*P_{0,r}),
		\end{align*}
		where the last isomorphism follows from that each $P_{0,i}$ is a conjugate of some $N_j$ and when $t$ ranges over elements of $\Gamma/\Gamma_0$, the conjugates $N_j$ in the inner free product above range over peripheral subgroups $P_{0,i}$.
	\end{proof}

	\section{Compactification of $M$}\label{secCompact}
	It is well-known that a non-uniform lattice $\Gamma$ in $\PU(n,1)$ is virtually torsion-free. So we can assume $\Gamma$ is torsion-free by passing to a finite-index subgroup if necessary. Therefore, the quotient space \(M:=\Gamma\backslash\Hy^n_\C\) is a non-compact hyperbolic complex manifold of finite volume with finitely many cusps.

	\subsection{The structure of cusps}\label{sec:Cusp Structure}
	\quad\\
	Here we give a concise description of the cusps in $M$, for more details see \cite[Section 4.1]{bregmanrelatively}. Let $\{\mathcal{C}_1,\ldots, \mathcal{C}_r\}$ be the cusps of $M$. By the Margulis lemma, each cusp $\mathcal{C}_i$ of $M$ is homeomorphic to $\Sigma_i \times [0,\infty)$ where $\Sigma_i$ is an $(2n-1)$-dimensional manifold as the quotient of $\mc{H}_{2n-1}(\R)$, the $(2n-1)$-dimensional Heisenberg group, by the peripheral subgroup $P_i$ (see \cite[\S 10.3]{BallmannSchroederGromov} for details). Then the corresponding cusp subgroup is isomorphic to $\pi_1(\Sigma_i)$. Each cusp corresponds to a conjugacy class of subgroups fixing a point on the boundary at infinity of \(\Hy^n_\C\). Farb in \cite{Farb-RH} proved that $\Gamma$ is hyperbolic relative to the collection of its cusp subgroups, which we denote by $\cP$. 
	
	There is a fiber sequence
	$$S^1\to \Sigma_i\to \mathcal O_i,$$ where $\mathcal O_i$ is a Euclidean orbifold finitely coveorange by $2n-2$-dimensional torus (see \cite[Section 4.1]{bregmanrelatively} for a comprehensive explanation). Note in general $\mathcal O_i$ can be singular, thus this is not generally a locally trivial fibration. But in our case, since each $P_i$ is
	torsion-free, $\Sigma_i$ is smooth and we freely pass to the torus cover to obtain an actual fiber bundle 
	\begin{equation}\label{circleseq}S^1\to \Sigma_i\to T_i,
	\end{equation}

	In the following three subsections, we will construct three different compact Eilenberg–MacLane spaces by compactifying $M$, which will be used in the Theorem \ref{maintheoremnew}: 
	
	\subsection{Borel-Serre compactification}
	\quad\\
	Borel and Serre in \cite{borel1973corners} constructed a compactification $M_0$ of $M$, which works in general for lattices in noncompact symmetric spaces. In our case, i.e. the rank 1 case and $\Gamma$ is torsion-free, the $M_0$ is a smooth manifold with boundary.   
	
	If we lift $\{\mathcal{C}_i\}$ to the universal cover $\Hy_{\mathbb C}^n$, the pre-image of $\sqcup_i\mathcal{C}_i$ is a  union of pairwise disjoint horoballs centeorange on the parabolic fixed points in $\partial_\infty\Hy_{\mathbb C}^n$, which is $\Gamma$-equivariant. The $\Gamma$-stabilizer of each parabolic fixed point conjugate to some $P_i$ that acts freely, geometrically on the corresponding horosphere with quotient $\Sigma_i$. The quotient of the complement of the interiors of these horoballs is therefore a compact aspherical manifold $M_0$ with boundary $\partial M_0=\sqcup_i\Sigma_i$. In particular, $M_0$ is an Eilenberg–MacLane space $K(\Gamma,1)$, thus $H_i(M_0)\cong H_i(\Gamma)$. The pair $(M_0,\partial M_0)$ is called the \emph{Borel--Serre compactification of $M$}. Each peripheral subgroup $P_i$ is associated with a unique boundary component $\Sigma_i$ and vice versa. Thus, $M_0$ is obtained from $M$ by removing a neighborhood of the cusps, in particular, $M$ and $M_0$ are homotopy equivalent and thus $\pi_1(M_0)=\pi_1(M)=\Gamma$. %A schematic picture for the Borel--Serre compactification of $M$ is shown below
	
	\subsection{Toroidal compactification}
	\quad\\
	There is another natural compactification of $M$ which fills in the cusps with the Euclidean orbifolds called \emph{Toroidal compactification}. The Toroidal compactification of $M$, is given by filling each fiber in the circle bundle $S^1\to \Sigma_i\to T_i$ to a disk $\mathbb D^2$, resulting a $\mathbb D^2$-bundle over $T_i$: $$\mathbb D^2\to B_i\to T_i,$$ where $B_i$ is the total space, which admits a deformation retraction to $T_i$. The resulting space is a K\"ahler manifold $\mathcal{T}(M)$ with boundary $T:=\sqcup T_i$. Moreover, $\partial B_i\cong \Sigma_i$.
	
	Hummel--Schroeder (see \cite{HummelSchroeder}) showed that we can always pass to a finite cover of $M$ so that its Toroidal compactification is an aspherical compact K\"ahler manifold with a smooth boundary $T$. In the language of Dehn-filling, Toroidal compactification (after passing to a finite cover of $M$ if necessary) is given by collapsing the normal subgroup of each cusp subgroup corresponding to the $S^1$ in each circle bundle $S^1\to \Sigma_i\to T_i$. For a more comprehensive description of Toroidal compactification, see \cite[Section 4.2]{bregmanrelatively}.

	For the following, we will assume that $\mathcal{T}(M)$ is an aspherical compact K\"ahler manifold with a smooth boundary $T$. In particular, $\mathcal{T}(M)$ is an Eilenberg–MacLane space $K(\mathcal{T}(M),1)$, thus $H_i(\mathcal{T}(M))\cong H_i(\pi_1(\mathcal{T}(M))$. Observe that $M_0\setminus \partial M_0$ is homeomrphic to $M$ and $M$ is homeomorphic to $\mathcal{T}(M)\setminus T$, which induces a map of relative pairs $f: (M_0, \partial M_0)\to (\mathcal{T}(M),T) $ that is a homeomorphism on the interior of $M_0$ and sends each boundary component $\Sigma_i\subset \partial M=\sqcup \Sigma_i$ to the corresponding boundary component $T_i\subset T=\sqcup_i T_i$ by the fibering in Equation\ref{circleseq}.

	\begin{proposition}\label{HomologyToroidal} The map $f\colon (M_0, \partial M_0)\rightarrow (\mathcal{T}(M),T) $ induces an isomorphism \[f_*\colon H_{2n}(M_0,\partial M_0)\rightarrow H_{2n}(\mathcal{T}(M),T)\]
	\end{proposition}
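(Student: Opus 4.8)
The plan is to reduce the statement to a point-set fact by collapsing the boundaries. Since $M_0$ is the Borel--Serre compactification it is a compact smooth manifold with boundary $\partial M_0=\sqcup_i\Sigma_i$, and (after the finite cover provided by Hummel--Schroeder) $\mathcal{T}(M)$ is a compact smooth manifold with boundary $T=\sqcup_i T_i$. In particular both boundary inclusions admit collar neighborhoods, so $\partial M_0\hookrightarrow M_0$ and $T\hookrightarrow\mathcal{T}(M)$ are neighborhood deformation retracts; hence $(M_0,\partial M_0)$ and $(\mathcal{T}(M),T)$ are good pairs and the quotient maps induce isomorphisms
\[
H_{2n}(M_0,\partial M_0)\;\xrightarrow{\ \sim\ }\;\widetilde H_{2n}\bigl(M_0/\partial M_0\bigr),\qquad
H_{2n}(\mathcal{T}(M),T)\;\xrightarrow{\ \sim\ }\;\widetilde H_{2n}\bigl(\mathcal{T}(M)/T\bigr),
\]
which are natural with respect to maps of pairs. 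Under these isomorphisms $f_*$ is carried to the map $\bar f_*$ induced by the map of quotients $\bar f\colon M_0/\partial M_0\to \mathcal{T}(M)/T$, so it suffices to show $\bar f$ is a homeomorphism.

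Next I would verify that $\bar f$ is a homeomorphism directly. It is well defined because $f(\partial M_0)\subseteq T$, and continuous by the universal property of the quotient. Because $f$ restricts to a homeomorphism of interiors $M_0\setminus\partial M_0\to\mathcal{T}(M)\setminus T$, we have $f^{-1}(T)=\partial M_0$; thus $\bar f$ maps the complement of the collapsed point bijectively onto the complement of the collapsed point and the collapsed point to the collapsed point, so $\bar f$ is a bijection. Finally $M_0$ and $\mathcal{T}(M)$ are compact Hausdorff and $\partial M_0$, $T$ are closed, so $M_0/\partial M_0$ and $\mathcal{T}(M)/T$ are compact Hausdorff; a continuous bijection from a compact space to a Hausdorff space is a homeomorphism. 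Hence $\bar f$ is a homeomorphism, $\bar f_*$ is an isomorphism on reduced homology in all degrees, and naturality of the displayed isomorphisms then gives that $f_*\colon H_{2n}(M_0,\partial M_0)\to H_{2n}(\mathcal{T}(M),T)$ is an isomorphism.

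I do not expect a serious obstacle; the only points needing care are these point-set verifications together with the standard fact that a compact manifold-with-boundary is a good pair with its boundary. It is perhaps worth recording as a sanity check that both groups are infinite cyclic: $M_0$ and $\mathcal{T}(M)$ are connected, compact, oriented $2n$-manifolds with boundary ($M$ being a complex manifold and $\mathcal{T}(M)$ K\"ahler), so Lefschetz duality gives $H_{2n}(M_0,\partial M_0)\cong H^0(M_0)\cong\mathbb{Z}$ and likewise for $\mathcal{T}(M)$, generated by the fundamental classes, so the proposition says exactly that $f$ has degree $\pm1$ — which is in any case clear from $f$ being a homeomorphism over the interior. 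One should avoid trying to argue via the five lemma on the long exact sequences of the two pairs, since $f\colon M_0\to\mathcal{T}(M)$ and $f|\colon\partial M_0\to T$ are \emph{not} homotopy equivalences (the latter is the circle-bundle projection $\Sigma_i\to T_i$, and $\pi_1(\mathcal{T}(M))\neq\pi_1(M_0)$ in general), so no such comparison is available.
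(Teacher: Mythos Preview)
Your argument is correct, but it takes a different route from the paper's. The paper argues via excision: it observes that $\mathcal{T}(M)=M_0\cup_{\partial M_0}(\sqcup_i B_i)$, where each $B_i$ is the $\mathbb{D}^2$-bundle over $T_i$ with $\partial B_i\cong\Sigma_i$; excising the interiors of the $B_i$ gives $H_*(M_0,\partial M_0)\cong H_*(\mathcal{T}(M),\sqcup_i B_i)$, and then the deformation retraction $B_i\simeq T_i$ identifies this with $H_*(\mathcal{T}(M),T)$, with $f$ realized by that retraction. Your approach instead passes to the collapsed quotients and shows $M_0/\partial M_0\to\mathcal{T}(M)/T$ is a homeomorphism. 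Both are short and standard; yours has the advantage of yielding the isomorphism in all degrees in one stroke and of making the ``degree $\pm 1$'' interpretation transparent, while the paper's version keeps everything inside $\mathcal{T}(M)$ and makes explicit the role of the disk bundles $B_i$. Amusingly, the paper uses exactly your good-pair/quotient trick later for the analogous comparison with $\mathcal{C}(M)$.

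One point to correct in your write-up: $\mathcal{T}(M)$ is a \emph{closed} $2n$-manifold; the ``boundary'' $T=\sqcup_i T_i$ is the compactifying divisor, a codimension-$2$ submanifold (the zero sections of the $B_i$), not the topological boundary. This does not damage your argument --- $(\mathcal{T}(M),T)$ is still a good pair via a tubular neighborhood, $f^{-1}(T)=\partial M_0$ still holds since $f$ maps $\mathrm{int}(M_0)$ homeomorphically onto $\mathcal{T}(M)\setminus T$, and the compact--Hausdorff step is unchanged --- but your collar justification and the Lefschetz-duality remark for $\mathcal{T}(M)$ should be reworded accordingly (for the sanity check, use Poincar\'e duality on the closed manifold $\mathcal{T}(M)$ together with $\dim T=2n-2$ to get $H_{2n}(\mathcal{T}(M),T)\cong H_{2n}(\mathcal{T}(M))\cong\mathbb{Z}$).
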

	\begin{proof}
		Since $\partial B_i\cong \Sigma_i$, removing the interior of $\sqcup_i B_i$ yields a manifold homeomorphic to $M_0$. By Excision theorem, we thus have $H_*(M_0,\partial M_0)\cong H_*(\mc{T}(M),\sqcup_i B_i)$. On the other hand, each $B_i$ is homotopy equivalent to $T_i$, so $H_*(M_0,\partial M_0)\cong H_*(\mc{T}(M),T)$. Restricted to $M_0\cong \mathcal{T}(M)\setminus \text{Int}(\sqcup_i B_i)$, this deformation retraction gives exactly the map $f$ defined above.
	\end{proof}

	%Let $\tau=f_*\colon \Gamma\rightarrow \pi_1(\mathcal{T}(M))$ be the homomorphism on $\pi_1$ induced by $f\colon M_0\rightarrow \mathcal{T}(M)$. The image $\mc{D}$ of $\mc{P}$ is a collection of free abelian subgroups of $\pi_1(\mathcal{T}(M))$ such that the pair $(\pi_1(\mathcal{T}(M)),\mc{D})$ is relatively hyperbolic.

	\subsection{Collapsing the cusps}
	\quad\\
	Let $\Gamma_0\leq \Gamma$ be the finite index subgroup whose existence is guaranteed by Theorem \ref{theorem:Dehn}. Replacing $\Gamma$ with $\Gamma_0$, we may assume that there exists a quotient homomorphism $\varphi\colon \Gamma\rightarrow \overline{\Gamma}$ such that 
	\begin{itemize}
		
		\item $\overline{\Gamma}$ is infinite, torsion-free hyperbolic, and virtually special, in particular, $\overline{\Gamma}$ cubulated hyperbolic. 
		\item Let $\cP=\{P_1,\ldots, P_r\}$ the peripheral subgroups. Then $K=\ker\psi$ is the normal closure of the subgroup of $\Gamma$ generated by elements of $\cP$ and moreover $K$ can be written as a free product
		\begin{equation}\label{eqn:freeproduct}K\cong \underset{g\in \overline{\Gamma}}{\text{\Large$\ast$}}(P_{1}*\cdots*P_{r})\end{equation}
	\end{itemize}\quad\\

	Write $\partial M_0=\Sigma_1\sqcup\cdots \sqcup \Sigma_r$, where $\pi_1(\Sigma_i)=P_i$. Since $\Sigma_i$ is a quotient of the Heisenberg group $\mc{H}_{2n-1}(\R)$ by a torsion-free, cocompact subgroup, each $\Sigma_i$ is aspherical.
	
	By construction, $\overline{\Gamma}$ is obtained from $\Gamma$ by killing the peripheral subgroups $P_i$. Now define 
	$$\mc{C}(M):=M_0\sslash\partial M_0$$
	where the notation ``$\sslash$" means that we collapse each component of $\partial M_0$ to a point resepctively. Let $p_i\in \mc{C}(M)$ be the image of $\Sigma_i$ and set $S=\{p_1,\ldots, p_r\}$. So the space obtained from $M_0$ by gluing on the cone $C(\Sigma_i)$ for each component $\Sigma_i\subset \partial M_0$ is homeomorphic to $\mathcal{C}(M)$.

	\begin{figure}[h]
		\centering
		\begin{subfigure}{0.4\textwidth}
			\centering
			\begin{tikzpicture}
				%left orange torus
				\draw[thick, orange](-.2,3.3) arc(-180:180:.345cm and .15cm);
				\draw[orange](-.02
				,3.3) to[out=-20,in=200](.3,3.3);
				\draw[orange](0,3.29) to[out=20,in=160](.25,3.29);
				
				%right orange torus
				\draw[thick,orange](1.9,3.3) arc(-180:180:.325cm and .12cm);
				\draw[orange](2.05
				,3.3) to[out=-20,in=200](2.42,3.3);
				\draw[orange](2.1,3.29) to[out=20,in=160](2.38,3.29);
				
				%Thick Part
				\draw[thick](-0.1,4) to[out=270,in=90]
				(-1,1) to[out=270,in=180](1,0) to[out=0,in=270](3,1) to[out=90,in=270](2.5,4);
				\draw[thick](2,4) to[out=270,in=0](1,2.5) to[out=180,in=270](.5,4);
				\draw[thick](.2,1.5) to[out=-50,in=240](1.8,1.5);
				\draw[thick](.3,1.4) to[out=30,in=140](1.7,1.4);
				
				%Left boundary component
				\draw[thick,blue](-0.1,4) arc(-180:180:.3cm and .15cm);
				\draw[blue](.05
				,4) to[out=-20,in=200](.35,4);
				\draw(0.08,3.99) to[out=20,in=160](.33,3.99);
				
				%Right boundary component
				\draw[thick,blue](2,4) arc(-180:180:.25cm and .12cm);
				\draw[blue](2.1
				,4) to[out=-20,in=200](2.4,4);
				\draw[blue](2.13,3.99) to[out=20,in=160](2.38,3.99);
				
				\node at ((0.2,4.35){$\scriptstyle \Sigma_1$ };
				\node at ((2.25,4.325){$\scriptstyle \Sigma_2$ };
				
			\end{tikzpicture}
			
			\caption{Borel--Serre $M_0$, $\partial M_0=\Sigma_1\bigcup \Sigma_2$}
			\label{fig:BorelSerre}
		\end{subfigure}
		\hfill
		\begin{subfigure}{0.4\textwidth}
			\centering
			\begin{tikzpicture}
				%left orange torus
				\draw[thick, orange](-.2,3.3) arc(-180:180:.345cm and .15cm);
				\draw[orange](-.02
				,3.3) to[out=-20,in=200](.3,3.3);
				\draw[orange](0,3.29) to[out=20,in=160](.25,3.29);
				
				%right orange torus
				\draw[thick,orange](1.9,3.3) arc(-180:180:.325cm and .12cm);
				\draw[orange](2.05
				,3.3) to[out=-20,in=200](2.42,3.3);
				\draw[orange](2.1,3.29) to[out=20,in=160](2.38,3.29);
				
				%Thick Part
				\draw[thick](-0.1,4) to[out=270,in=90]
				(-1,1) to[out=270,in=180](1,0) to[out=0,in=270](3,1) to[out=90,in=270](2.5,4);
				\draw[thick](2,4) to[out=270,in=0](1,2.5) to[out=180,in=270](.5,4);
				\draw[thick](.2,1.5) to[out=-50,in=240](1.8,1.5);
				\draw[thick](.3,1.4) to[out=30,in=140](1.7,1.4);
				
				%Left boundary component
				% \draw[very thick, blue](-0.1,4) arc(-180:180:.3cm and .1cm);
				\draw[very thick, blue](-0.1,4) to[out=-20,in=200](.5,4);
				\filldraw[blue] (-0.1,4) circle (1pt);
				\filldraw[blue] (.5,4) circle (1pt);
				
				%Right boundary component

				\draw[very thick, blue](2,4) to[out=-20,in=200](2.5,4);
				\filldraw[blue] (2,4) circle (1pt);
				\filldraw[blue] (2.5,4) circle (1pt);
				
				\node at ((0.2,4.2){$\scriptstyle T_1$ };
				\node at ((2.25,4.2){$\scriptstyle T_2$ };

			\end{tikzpicture}
			\caption{Toroidal $\mathcal{T}(M)$, $T=T_1\bigcup T_2$}
			\label{fig:Toroidal}
		\end{subfigure}
		\hfill
		\begin{subfigure}{0.5\textwidth}
			\centering
			\begin{tikzpicture}

				%left orange torus
				\draw[thick, orange](-.2,3.3) arc(-180:180:.345cm and .15cm);
				\draw[orange](-.02
				,3.3) to[out=-20,in=200](.3,3.3);
				\draw[orange](0,3.29) to[out=20,in=160](.25,3.29);
				
				%right orange torus
				\draw[thick,orange](1.9,3.3) arc(-180:180:.325cm and .12cm);
				\draw[orange](2.05
				,3.3) to[out=-20,in=200](2.42,3.3);
				\draw[orange](2.1,3.29) to[out=20,in=160](2.38,3.29);
				
				%Thick Part
				\draw[thick](.2,4) to[out=240,in=80](-.2,3.3)  to[out=260,in=90](-1,1) to[out=270,in=180](1,0) to[out=0,in=270](3,1) to[out=90,in=275](2.55,3.3) to[out=95,in=300](2.25,4);
				\draw[thick](2.25,4) to[out=240,in=85](1.9,3.3) to[out=265,in=0](1,2.5) to[out=180,in=270](.490,3.3) to[out=100,in=290](.2,4);
				\draw[thick](.2,1.5) to[out=-50,in=240](1.8,1.5);
				\draw[thick](.3,1.4) to[out=30,in=140](1.7,1.4);
				\filldraw[blue] (.2,3.95) circle (1.5pt);
				\filldraw[blue] (2.25,3.95) circle (1.5pt);

				\node at ((0.2,4.2){$\scriptstyle p_1$ };
				\node at ((2.25,4.2){$\scriptstyle p_2$ };
				
			\end{tikzpicture}
			\caption{Collapsing the cusps $\mathcal{C}(M)$, $S=\{p_1,p_2\}$}
			\label{fig:Cone}
		\end{subfigure}
		\caption{Schematic picture of the 3 compactifications of $M$.}
		\label{fig:Compactifications}
	\end{figure}
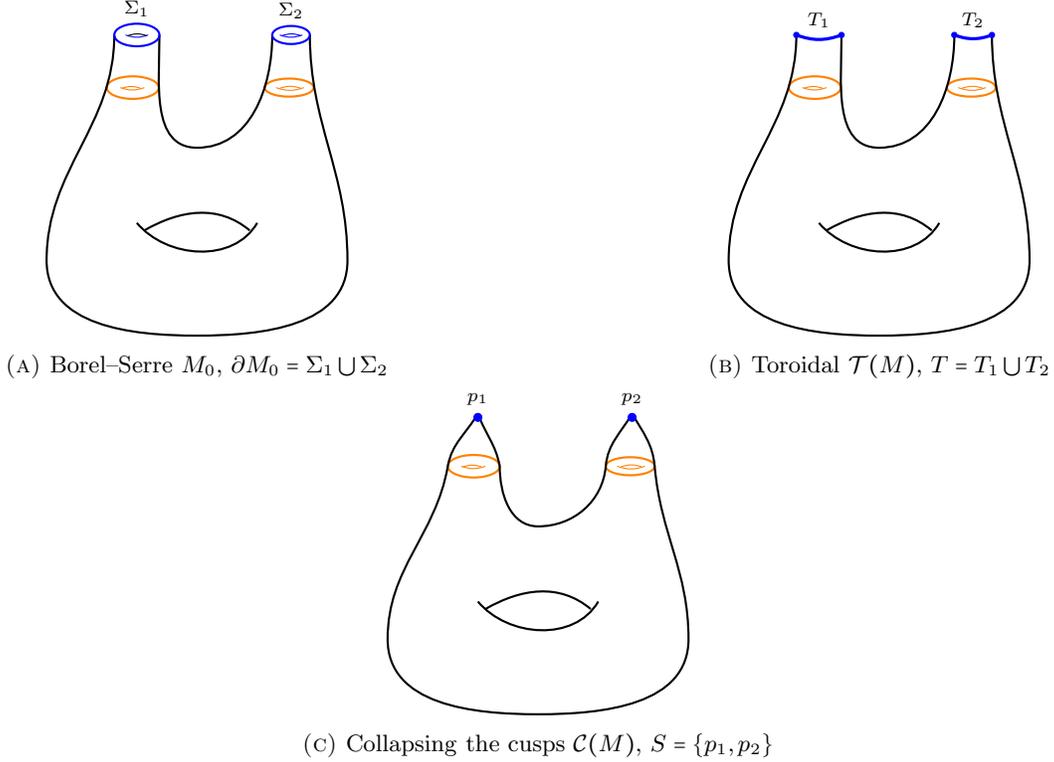

	\begin{lemma}\label{lem:Coneoff}
		The space $\mc{C}(M)$ is the Eilenberg–MacLane space $K(\overline{\Gamma},1)$.
	\end{lemma}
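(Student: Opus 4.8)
The plan is in two parts: first identify $\pi_1(\mc{C}(M))$ with $\overline{\Gamma}$ by van Kampen's theorem, and then prove that $\mc{C}(M)$ is aspherical by showing that its universal cover is contractible, via a Mayer--Vietoris computation combined with the Hurewicz and Whitehead theorems. For the fundamental group: $\mc{C}(M)$ is obtained from $M_0$ by gluing a cone $C(\Sigma_i)$ along each boundary component $\Sigma_i\subset\partial M_0$, and $\pi_1(\Sigma_i)=P_i$ injects into $\pi_1(M_0)=\Gamma$. Attaching $C(\Sigma_i)$ kills the normal closure of $P_i$, so applying van Kampen to the cones (one at a time, the $\Sigma_i$ being disjoint) yields $\pi_1(\mc{C}(M))\cong\Gamma/\langle\langle P_1\cup\dots\cup P_r\rangle\rangle=\Gamma/K=\overline{\Gamma}$, using Theorem~\ref{theorem:Dehn}(2) for the identification of $K$ with the normal closure of the $P_i$.

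Now let $E\to\mc{C}(M)$ be the universal cover. Its restriction over $M_0$ is the connected cover $\widehat{M_0}\to M_0$ corresponding to $K\trianglelefteq\Gamma$; since $M_0$ is aspherical, so is $\widehat{M_0}$, with $\pi_1(\widehat{M_0})=K$. As each $P_i\subset K$, every $\Sigma_i$ lifts homeomorphically to $\widehat{M_0}$, and the deck group $\overline{\Gamma}$ permutes the lifts of a given $\Sigma_i$ freely and transitively; thus $\partial\widehat{M_0}=\bigsqcup_{(i,g)}\Sigma_i^{(g)}$ is a disjoint union of copies of the $\Sigma_i$ indexed by $\{1,\dots,r\}\times\overline{\Gamma}$, with $\pi_1(\Sigma_i^{(g)})$ the corresponding conjugate of $P_i$ inside $K$. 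Over each cone $C(\Sigma_i)$, the preimage in $E$ is a disjoint union of contractible copies of $C(\Sigma_i)$, one capping off each lift $\Sigma_i^{(g)}$. So $E$ is obtained from $\widehat{M_0}$ by coning off all of its boundary components, and --- this is the point --- the indexing set of these boundary components matches exactly that of the free product decomposition $K\cong\underset{g\in\overline{\Gamma}}{\text{\Large$\ast$}}(P_1*\dots*P_r)$ of \eqref{eqn:freeproduct}, the free factors being precisely the subgroups $\pi_1(\Sigma_i^{(g)})$.

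To finish, apply Mayer--Vietoris to $E=\widehat{M_0}\cup\big(\bigsqcup_{(i,g)}C(\Sigma_i^{(g)})\big)$, with intersection homotopy equivalent to $\partial\widehat{M_0}=\bigsqcup_{(i,g)}\Sigma_i^{(g)}$. All pieces are aspherical, so the sequence is governed by group homology, and the relevant map $\bigoplus_{(i,g)}H_n(\pi_1\Sigma_i^{(g)})\to H_n(\widehat{M_0})=H_n(K)$ is the natural map $\bigoplus H_n(P_i^{(g)})\to H_n\big(\underset{(i,g)}{\text{\Large$\ast$}}P_i^{(g)}\big)$ for a free product, which is an isomorphism in every positive degree (equivalently $K(K,1)\simeq\bigvee_{(i,g)}K(P_i^{(g)},1)$). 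Since the cones are acyclic, Mayer--Vietoris then forces $\widetilde{H}_*(E)=0$. As $E$ is simply connected (a universal cover) with vanishing reduced homology, Hurewicz gives $\pi_*(E)=0$ and Whitehead gives that $E$ is contractible; hence $\mc{C}(M)$ has contractible universal cover and fundamental group $\overline{\Gamma}$, i.e.\ it is a $K(\overline{\Gamma},1)$.

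The step I expect to be the main obstacle is verifying that \eqref{eqn:freeproduct} is realized \emph{geometrically}: that its free factors are exactly the images in $K=\pi_1(\widehat{M_0})$ of the fundamental groups of the boundary lifts of the $\Sigma_i$. This compatibility of \eqref{eqn:freeproduct} with the boundary structure is what makes $\bigoplus H_*(P_i^{(g)})\to H_*(K)$ an isomorphism rather than merely a surjection; without it the argument fails, as is already shown by the pair of pants with all three boundary circles coned off, which is the sphere $S^2$ rather than a contractible space. Granting this compatibility, which is implicit in the construction behind Theorem~\ref{theorem:Dehn}, the remaining Mayer--Vietoris bookkeeping is routine.
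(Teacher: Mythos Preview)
Your proof is correct and follows essentially the same route as the paper: van Kampen for $\pi_1$, then asphericity by computing the reduced homology of the universal cover via Mayer--Vietoris, using the free product decomposition \eqref{eqn:freeproduct} to identify $H_*(K)$ with $\bigoplus_{(i,g)}H_*(P_i^{(g)})$. The only noteworthy difference is that you explicitly isolate the compatibility issue --- that the free factors of \eqref{eqn:freeproduct} must coincide with the conjugates of the $P_i$ carried by the boundary lifts $\Sigma_i^{(g)}$, so that the Mayer--Vietoris map $\bigoplus H_*(\Sigma_i^{(g)})\to H_*(M_K)$ is the natural isomorphism and not merely an abstract one --- whereas the paper passes over this point silently; your caution here is warranted, and the compatibility indeed follows from the Groves--Manning--Sisto result invoked in Theorem~\ref{theorem:Dehn}.
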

	\begin{proof} Since a neighborhood of each $p_i$ is homeomorphic to the cone on $\Sigma_i$, it follows from van Kampen's theorem that $\pi_1(\mc{C}(M))=\overline{\Gamma}$. It remains to show that $\mc{C}(M)$ is aspherical.
		Consider the short exact sequence \[1\rightarrow K\rightarrow\Gamma\rightarrow\overline{\Gamma}\rightarrow 1.\] Find the covering space of $M_0$ corresponding $K$, $\pi_K\colon M_K\rightarrow M$. Since $M_0$ is a deformation retraction of $M$ with the universal cover $\HC^n$, so $M_K$ can be seen as a $K(K,1)$. Observe that the universal cover of $\mc{C}(M)$, say $\widetilde{\mc{C}(M)}$, is obtained from $M_K$ by collapsing boundary components of $M_K$ to points separately. Since each cusp subgroup is contained in $K$, $\pi_K$ projects each boundary component of $M_K$ onto some boundary component of $M_0$. By Theorem \ref{theorem:Dehn}-(3), $K\cong\underset{g\in \overline{\Gamma}}{\text{\Large$\ast$}}(P_{1}*\cdots*P_{r})$, also recall each $\Sigma_i$ is a $K(P_i,1)$, it follows that for $j\ge0$,
		$$H_j(M_K)\cong H_j(K;\Z)\cong\bigoplus_{g\in \overline{\Gamma}}\left(H_j(P_1;\Z)\oplus\cdots\oplus H_*(P_r;\Z)\right)\cong\bigoplus_{\Sigma\subseteq \partial M_k}H_j(\Sigma).$$
		
		Since $\widetilde{\mc{C}(M)}$ is simply-connected (and connected), by the Hurewicz theorem it is enough to show that $H_i(\widetilde{\mc{C}(M)})=0$ when $i\geq 2$. Let $\overline V$ be a neighborhood of $S$ which is evenly coveorange by the covering map $\pi\colon \widetilde{\mc{C}(M)}\rightarrow \mathcal{C}(M)$. Construct an open cover of $\widetilde{\mc{C}(M)}$, $\{U_i\}_{i=1}^2$, where $U_1:=p^{-1}(\mc{C}(M)\setminus S)$ and $U_2:=p^{-1}(\bar V)$. Note each component of $U_2$ is homeomorphic to the cone on some $\Sigma_i$. We choose $U_1$ and $U_2$ so that $U_1\cap U_2$ is a disjoint union of connected components. Each component is homeomorphic to $\Sigma\times (0,1)$ for some $\Sigma\subseteq \partial M_K$.
		It follows that $H_i(U_1\cap U_2)\cong\bigoplus_{\Sigma\subset \partial M_K}H_p(\Sigma) $.
		
		Now apply Mayer--Vietoris to the triad $(\widetilde{\mc{C}(M)},U_1,U_2)$, it gives a long exact sequence:
		$$... \to H_{i}(U_1\cap U_2)\to H_i(U_1)\oplus H_i(U_2)\to H_i(\widetilde{\mc{C}(M)})\to H_{i-1}(U_1\cap U_2)\to H_{i-1}(U_1)\oplus H_{i-1}(U_2)\to...$$
		Observe that each connected component of $U_2$ as a cone is contractible, thus $H_i(U_2)=0$ for $i>0$. On the other hand, 
		$H_i(U_1)\cong H_1(M_K)$ for all $i$ since $U_1$ is homeomorphic to $M_K$. Also recall that $H_i(U_1\cap U_2)\cong \bigoplus_{\Sigma\subset \partial M_K}H_i(\Sigma)$, so the long exact sequence becomes 
		$$...\to \bigoplus_{\Sigma\subseteq \partial M_K}H_{i}(\Sigma)\to H_i(M_K)\to H_i(\widetilde{\mc{C}(M)})\to \bigoplus_{\Sigma\subseteq \partial M_K}H_{i-1}(\Sigma)\to H_{i-1}(M_K)\to...$$
		for all $i\geq 2$. In the long exact sequence above, since $H_j(M_K)\cong\bigoplus_{\Sigma\subseteq \partial M_k}H_j(\Sigma)$, it follows that $H_i(\widetilde{\mc{C}(M)})=0$ for all $i \geq 2$.
	\end{proof}

	\section{Proof of the Theorem \ref{maintheoremnew}}\label{ending}
	Recall Lemma \ref{lem:Coneoff} says $H_*(\overline{\Gamma};\Z)\cong H_*(\mc{C}(M))$. Observe that $\mc{C}(M)$ is obtained from $M_0$ by collapsing boundary components to points separately. Denote such a quotient map by $q\colon M_0\rightarrow \mc{C}(M)$, which also induces a map of pairs $(M_0,\partial M_0)\to (\mathcal{C}(M),S)$.

	\begin{lemma}\label{lemmaQM} The induced map $q_*\colon H_{2n}(M_0,\partial M_0)\rightarrow H_{2n}(\mathcal{C}(M),S)$ is an isomorphism.  In particular, $H_{2n}(\mc{C}(M))$ is isomorphic to $H_{2n}(M_0,\partial M_0)$ and $H_{2n}(\overline \Gamma)\cong\mathbb Z$.
	\end{lemma}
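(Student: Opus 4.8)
The plan is to run the same excision argument used for Proposition \ref{HomologyToroidal}, with the cones $C(\Sigma_i)$ playing the role of the disk bundles $B_i$, and then to pin down the resulting group using Lefschetz duality.

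First I would fix an open collar neighborhood $U\cong\partial M_0\times[0,1)$ of $\partial M_0$ in $M_0$, onto which $\partial M_0$ is a deformation retract, and set $V:=q(U)$. By the description of $\mathcal{C}(M)$ recorded just before Lemma \ref{lem:Coneoff}, each component of $V$ is a neighborhood of some $p_i\in S$ homeomorphic to the open cone on $\Sigma_i$, so $V$ deformation retracts onto $S$; moreover $q$ restricts to a homeomorphism of pairs $(M_0\setminus\partial M_0,\,U\setminus\partial M_0)\xrightarrow{\cong}(\mathcal{C}(M)\setminus S,\,V\setminus S)$. Then I would chase the commutative diagram induced by $q$ whose two rows are $H_{2n}(M_0,\partial M_0)\xrightarrow{\cong}H_{2n}(M_0,U)\xrightarrow{\cong}H_{2n}(M_0\setminus\partial M_0,\,U\setminus\partial M_0)$ and $H_{2n}(\mathcal{C}(M),S)\xrightarrow{\cong}H_{2n}(\mathcal{C}(M),V)\xrightarrow{\cong}H_{2n}(\mathcal{C}(M)\setminus S,\,V\setminus S)$: the left horizontal maps are isomorphisms because $\partial M_0\hookrightarrow U$ and $S\hookrightarrow V$ are deformation retracts, the right horizontal maps are excisions (of $\partial M_0$, resp.\ of $S$), and the rightmost vertical map is an isomorphism since $q$ is a homeomorphism there. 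Hence $q_*\colon H_{2n}(M_0,\partial M_0)\to H_{2n}(\mathcal{C}(M),S)$ is an isomorphism.

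Next, since $S$ is a finite discrete set and $2n\geq 2$, the long exact sequence of the pair $(\mathcal{C}(M),S)$ collapses to $0=H_{2n}(S)\to H_{2n}(\mathcal{C}(M))\to H_{2n}(\mathcal{C}(M),S)\to H_{2n-1}(S)=0$, giving $H_{2n}(\mathcal{C}(M))\cong H_{2n}(\mathcal{C}(M),S)\cong H_{2n}(M_0,\partial M_0)$. Finally, $M_0$ is a compact connected $2n$-manifold with boundary whose interior $M=\Gamma\backslash\Hy^n_\C$ is orientable, hence $M_0$ is orientable, and Lefschetz duality gives $H_{2n}(M_0,\partial M_0)\cong H^0(M_0)\cong\Z$. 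Combining this with Lemma \ref{lem:Coneoff}, which identifies $\mathcal{C}(M)$ with a $K(\overline{\Gamma},1)$, yields $H_{2n}(\overline{\Gamma};\Z)\cong H_{2n}(\mathcal{C}(M))\cong\Z$.

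I do not expect a serious obstacle: the argument is elementary algebraic topology. The only points needing a little care are checking that $U$ and $V=q(U)$ genuinely realize the excision hypotheses (which rests on the cone-neighborhood description of $\mathcal{C}(M)$ near $S$, already established when $\mathcal{C}(M)$ was constructed) and recording orientability of $M_0$, so that the group in question is exactly $\Z$ rather than merely $\Z$ up to torsion.
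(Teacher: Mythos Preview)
Your argument is correct and essentially the same as the paper's: the paper invokes the ``good pair'' theorem (Hatcher, Theorem~2.13) to identify $H_{2n}(M_0,\partial M_0)\cong \tilde H_{2n}(M_0/\partial M_0)\cong \tilde H_{2n}(\mathcal{C}(M)/S)\cong H_{2n}(\mathcal{C}(M),S)$, which is exactly the excision-plus-deformation-retract square you wrote out by hand with the collar $U$ and $V=q(U)$. The remaining steps---the long exact sequence of $(\mathcal{C}(M),S)$, Poincar\'e--Lefschetz duality for $H_{2n}(M_0,\partial M_0)\cong\Z$, and the appeal to Lemma~\ref{lem:Coneoff}---match the paper verbatim; your only addition is making the orientability of $M_0$ explicit, which the paper leaves implicit.
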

	\begin{proof} Note $(M_0,\partial M_0)$ and $(\mathcal{C}(M),S)$ are good pairs, it follows from the Theorem 2.13 of \cite{Hatcher}), the quotient maps $ M_0\to M_0/\partial M_0$ 
		and $\mc{C}(M)\to \mc{C}(M)/S$ induce the isomorphism $$H_{2n}(M_0,\partial M_0)\to \tilde H_{2n}(M_0/\partial M_0,\{*\}),$$ where $\{*\}$ is the image of $\partial M_0$, and the isomorphism $$H_{2n}(\mathcal{C}(M),S)\to\tilde H_{2n}(\mathcal{C}(M)/S,\{*\}),$$ for each $i\ge0$, where now $\{*\}$ represents the common image of $\{c_1,\ldots, c_r\}$ respectively. There is an obvious homeomorphism $ (M_0/\partial M_0,*)\to (\mc{C}(M)/S,*)$, which induces an isomorphism
		$$H_{2n}(M_0/\partial M_0,*)\to H_{2n}(\mc{C}(M)/S,*).$$
		Therefore, there is a commutative diagram
		\[
		\xymatrix{H_{2n}(M_0,\partial M_0)\ar[r]^{ q_*}\ar[d]_\cong &H_{2n}(\mc{C}(M),S)\ar[d]^{\cong}\\
			H_{2n}(M_0/\partial M_0,*)\ar[r]_\cong & H_{2n}(\mc{C}(M)/S,*)
		}\]
		It follows that the induced map $q_*:H_{2n}(M_0,\partial M_0)\rightarrow H_{2n}(\mc{C}(M))$ is also an isomorphism. 
		Since the dimension of $\mc{C}(M)$ is $2n\geq 4$, the long exact sequence of the relative homology gives the isomorphism $H_{2n}(\mc{C}(M))\to H_{2n}(\mathcal{C}(M),S)$ induced by inclusion. Therefore, $$H_{2n}(M_0,\partial M_0)\to H_{2n}(\mc{C}(M))$$ is an isomorphism. By Poincar\'e-Lefschetz duality, $H_{2n}(M_0,\partial M_0)\cong \Z$. Since the dimension of $\mc{C}(M)$ is $2n\ge 4$, from the long exact sequence of the pair $(\mc{C}(M),S)$, there is an isomorphism induced by inclusion: $H_{2n}(\mc{C}(M))\xrightarrow{\cong}H_{2n}(\mc{C}(M),S)$. By Lemma \ref{lem:Coneoff}, $\mc{C}(M)$ is a $K(\overline\Gamma,1)$, it follows that $H_{2n}(\overline \Gamma)\cong\mathbb Z$.
		
	\end{proof}

	We are now ready to prove Theorem \ref{maintheoremnew}. %The proof is motivated from the Theorem 1.1 of an unpublished version of \cite{bregmanrelatively} from Corey Bregman and the author, which are not used in the final version with Daniel Groves.
	
	\begin{proof}[Proof of Theorem \ref{maintheoremnew}]
		Suppose $(\Gamma,\cP)$ admits Property \ref{property} on a CAT(0) cube complex $X$. The strategy is to find a hyperbolic Dehn filling of $\Gamma$ which is cubulated and which factors through $\pi_1(\mathcal{T}(M))$. Define $\mc{D}$ to be the image of $\mc{P}$ under homomorphism on $\pi_1$ induced by $f$, which is a collection of free abelian subgroups of $\pi_1(\mathcal{T}(M))$ such that the pair $(\pi_1(\mathcal{T}(M)),\mc{D})$ is relatively hyperbolic.
		
		Note since $K$ is a normal subgroup containing each peripheral subgroup, and $\mc{T}(M)$ is obtained by collapsing  a subgroup of each peripheral subgroup, we have the desired factorization:
		\begin{equation}\label{eqn:KeyDiagram}
			\xymatrix{(\Gamma,\cP)\ar[rr]^\varphi\ar[dr]_{f_*}&&(\overline{\Gamma},1)\\&(\pi_1(\mathcal{T}(M)),\mathcal{D})\ar[ur]_\kappa}
		\end{equation}
		
		Since $\pi_1(\mathcal{T}(M))$ is K\"ahler and $\Gamma$ is cubulated hyperbolic, in particular, it admits a full system of convex cuts, by the corollary on page 52 of \cite{delzant2005cuts}, the homomorphism $\kappa: \pi_1(\mathcal{T}(M))\to \overline{\Gamma}$ must virtually admit a further factorization
		\begin{equation}\label{eqn:Surface Factorization}
			\xymatrix{(\pi_1(\mathcal{T}(M))_1,\mc{D}_1)\ar[rr]^{\kappa_1}\ar[dr]&&(\overline{\Gamma},1)\\&(\overline{\pi_1(\mathcal{T}(M))},\overline{\mc{D}})\ar[ur]}
		\end{equation}
		where $\pi_1(\mathcal{T}(M))_1$ is a finite index subgroup of $\pi_1(\mathcal{T}(M))$, $\overline{\pi_1(\mathcal{T}(M))}$ is a surface group, and the image $\overline{\mc{D}}$ of $\mc{D}_1$ (where $\mc{D}_1$ is induced from $\mc{D}$) is a collection of quasi-convex subgroups of $\overline{\pi_1(\mathcal{T}(M))}$.

		Since the virtual cohomological dimension of $\overline{\pi_1(\mathcal{T}(M))}$ is 2, but $\dim(\mc{T}(M))=2n\geq 4$, this implies that $(\kappa_1)_*\colon H_{2n}(\pi_1(\mathcal{T}(M))_1,\mc{D}_1)\rightarrow H_{2n}(\overline{\Gamma})$ must be  the zero map. On the other hand, recall $M_0$ is the $K(\Gamma,1)$,$\mathcal{T}(M)$ is the $K(\pi_1(\mathcal{T}(M)),1)$, and $\mathcal C(M)$ is the $K(\overline\Gamma,1)$, so by Lemma \ref{lemmaQM}, $\varphi$ induces an isomorphism $\varphi_*\colon H_{2n}(\Gamma,\cP)\to H_{2n}(\overline\Gamma,\{1\})\cong H_{2n}(\overline\Gamma)\cong\Z$, which factors through the isomorphism  $f_*\colon H_{2n}(\Gamma,\cP)\rightarrow H_{2n}(\pi_1(\mathcal{T}(M)),\mc{D})$ by Proposition \ref{HomologyToroidal}. 
		
		Therefore $\kappa_*\colon H_{2n}(\pi_1(\mathcal{T}(M)),\mc{D})\rightarrow H_{2n}(\overline{\Gamma},\{1\})$ has to be an isomorphism. In particular, the restriction map $(\kappa_1)_*=({\kappa|_{\pi_1(\mathcal{T}(M))_1}})_*\colon H_{2n}(\pi_1(\mathcal{T}(M))_1,\mc{D}_1)\rightarrow H_{2n}(\overline{\Gamma},\{1\})\cong\Z$ induced from the degree map is non-trivial, which is a contradiction.
	\end{proof}

	%\section{Daniel's suggestion}
	% I think all we need this for in the paper with Corey is to argue that the kernel of the action on the tree is finite, right?  This should be straightforward to prove anyway, I think.
	
	%For example, the cube complex should be delta-hyperbolic for a similar reason to the one in Corollary B of Hyperbolic groups acting improperly, and then action on the cube complex should be acylindrical for the same reasons as in this same result.  (Full relatively quasiconvex subgroups are strongly quasiconvex in the sense of Tran.)
	
	%If an element is in the kernel of the action, then it fixes any pair of hyperplanes elevating the one you're using to build the splitting.  Choosing the appropriate special cover in the cubulated hyperbolic group, it is easy to make a pair of such covers be a long way apart.  Convex subsets of a hyperbolic space that are a long way apart have bounded size projections to each other (like $5\pi_1(\mathcal{T}(M))$ or less) and then acylindricity says that the set of things fixing this pair of hyperplanes is finite, so the kernel is finite.

	\bibliography{name} 
	\bibliographystyle{alpha} 
	
\end{document}